\date{}
\newcommand{\g}{{\mathfrak g}}
\newcommand{\n}{{\mathfrak n}}
\newcommand{\h}{{\mathfrak h}}
\newcommand{\gl}{{\mathfrak {gl}}}
\newcommand{\Z}{{\mathbb Z}}
\newcommand{\Q}{{\mathbb Q}}
\newcommand{\C}{{\mathbb C}}
\newcommand{\cf}{\mathcal F}
\newcommand{\cc}{\mathcal C}
\newcommand{\ck}{\mathbbm{k}}
\newcommand{\nn}{{(n)}}
\newcommand{\Rep}{\operatorname{Rep}}
\renewcommand{\deg}{\operatorname{deg}~}
\newcommand{\En}{\operatorname{End}}
\newcommand{\Hom}{\operatorname{Hom}}
\newcommand{\Ind}{\operatorname{Ind}}
\newcommand{\hc}{{HC}}
\newcommand{\tens}[1]{%
  \mathbin{\mathop{\otimes}\displaylimits_{#1}}%
}
\newcommand{\lam}{\lambda}
\theoremstyle{plain}
\newtheorem{theorem}{Theorem}[section]
\newtheorem{lemma}[theorem]{Lemma}
\newtheorem{definition}[theorem]{Definition}
\newtheorem{corollary}[theorem]{Corollary}
\newtheorem{remark}[theorem]{Remark}
\theoremstyle{remark}
\newtheorem*{note}{Note}
\newtheorem*{ex}{Example}
\numberwithin{equation}{theorem}
\title[Harish-Chandra bimodules in $\mathrm{Rep}(GL_t)$]{Harish-Chandra bimodules in the Deligne category \boldmath{${\mathrm{Rep}(GL}$}$_t$\boldmath{$)$}}
\author{Alexandra Utiralova}
\begin{document}

\maketitle

\begin{abstract}
    In this paper we study the category of Harish-Chandra bimodules $HC_{\chi,\psi}$ in the Deligne category $\Rep(GL_t)$. In particular, we answer Question 3.25 posed in Pavel Etingof's paper \cite{E} and determine for which central characters $\chi$ and $\psi$ this category is not zero. 
\end{abstract}

\section{Introduction}

Representation theory in complex rank first started as an example in the paper by Deligne and Milne \cite{DM}, where the category $\Rep(GL_t)$ for $t$ not necessarily integer was introduced. It was further developed in later papers by Deligne, where he introduced  the categories $\Rep(O_t), \Rep(Sp_{2t})$ and $\Rep(S_t)$, interpolating the categories of representations of the groups $O_n, Sp_{2n}$ and $S_n$ correspondingly, and also suggested the ultraproduct realization of these categories \cites{D1, D2}. 

The goal of this paper is to study the categories $HC_{\chi,\psi}$ of
Harish-Chandra bimodules for the Lie algebra $\gl_t$ in the Deligne
category $\Rep(GL_t)$ where $t$ is a generic complex number, which
interpolate the categories of Harish-Chandra bimodules for $\gl_n(\C)$ with
fixed central characters to non-integer values of $n$. Namely, we
determine for which values of central characters this category is not
zero. This answers Question 3.25 in \cite{E}.

Harish-Chandra bimodules for $GL_t$ were defined in \cite{E} as follows. {A $(\gl_t,\gl_t)$-bimodule $M\in\Ind(\Rep(GL_t))$ is a Harish-Chandra bimodule if it is finitely generated (i.e. it is a quotient of $U(\gl_t)\otimes U(\gl_t)\otimes X$ for some $X\in\Rep(GL_t)$), the action of the diagonal copy of $\gl_t\subset \gl_t\oplus \gl_t$ is natural and  both copies of $Z(U(\gl_t))$ act locally finitely on $M$.} The category $\hc_{\chi,\psi}$ is then the category of Harish-Chandra bimodules on which the left copy of the center $Z(U(\gl_t))$ acts by a central character $\chi$ and the right one acts by $\psi$.

Our main result is Theorem \ref{t13}, which shows that $\hc_{\chi, \psi}$ is nonzero if and only if $\chi(u)-\psi(u)= \sum_{i=1}^r e^{b_i u} - \sum_{i=1}^s e^{c_i u} $  (central characters for $\gl_t$ and the generating function notation for them are defined in \ref{d1}). 

\subsection*{Acknowledgements.}
I would like to thank Pavel Etingof for suggesting this problem to me and for all the valuable discussions we had about it.

\section{Preliminaries}

\subsection{Notations}

From now on let $\mathbb{k}\coloneqq \overline\Q$, let $\mathfrak h$, $\mathfrak b$ and $\n_{{-}}$ denote the standard Cartan subalgebra, Borel subalgebra of upper-triangular matrices and the subalgebra of strictly lower-triangular matrices in $\gl_n(\mathbb{k})$ correspondingly,  for any $\lam\in\h^*$ let $\ck_\lam$ be the extension to $\mathfrak b$ of the one-dimensional $\h$-module corresponding to $\lam$. Let $W\simeq S_n$ be the Weyl group of $\gl_n(\mathbb{k})$,  let $\Lambda^+$ be the lattice of integral dominant weights and $\rho$ be the half sum of all positive roots of $\gl_n(\mathbb{k})$. We denote by $V^\nn$ the $n$-dimensional defining representation of $GL_n$. By $\Delta$ we will always mean the coproduct map.

\subsection{Basic results and definitions}

\begin{definition}
The Deligne category $\Rep(GL_t)$ is the Karoubian 
envelope (formally adjoining images
of idempotents) of the rigid symmetric $\C$-linear tensor category generated by a single object $V$ of dimension $t\in \C$, such that $\En(V^{\otimes m})=\C[S_m]$ for all $m\ge 1$. 
\end{definition}

Let us denote the object $V^{\otimes r}\otimes (V^*)^{\otimes s}\in\Rep(GL_t)$ by $[r,s]$.

The following theorem is copied from \cite{E}, Theorem 2.9.

\begin{theorem}\cite{D2}  \label{tuniversalprop}
The category $\Rep(GL_t)$ has the following universal property:
if $\mathcal D$ is a symmetric tensor category then isomorphism classes of (possibly
non-faithful) symmetric tensor functors $\Rep(GL_t) \to \mathcal D$ are in bijection with isomorphism classes of objects of dimension $t$, via
$F \mapsto F([1, 0])$.
\end{theorem}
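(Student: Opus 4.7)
The plan is to exhibit explicitly the inverse to the map $F\mapsto F([1,0])$ and then check that the two constructions are mutually inverse up to isomorphism. Throughout, I would think of $\Rep(GL_t)$ as the Karoubian envelope of the free $\C$-linear rigid symmetric monoidal category $\mathcal{T}_t$ on one object $V$ of categorical dimension $t$ together with the requirement $\En(V^{\otimes m})=\C[S_m]$. Since any symmetric tensor functor is determined up to unique isomorphism by its values on a generating object (and its monoidal/braiding/duality constraints), and since any additive functor between Karoubian categories extends uniquely to the Karoubian envelope, the content of the theorem is concentrated in the behavior on $\mathcal{T}_t$.

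First I would check that the map $F\mapsto F([1,0])$ is well-defined: symmetric tensor functors preserve categorical dimension, so $\dim F([1,0])=\dim[1,0]=t$ in $\mathcal D$. Next, for injectivity, given two functors $F,F'$ with $F([1,0])\simeq F'([1,0])$ via some isomorphism $\varphi$, I would propagate $\varphi$ to isomorphisms $F([r,s])\simeq F'([r,s])$ using the monoidal and duality constraints ($F([r,s])=F([1,0])^{\otimes r}\otimes F([1,0])^{*\otimes s}$) and verify naturality on generating morphisms; this uses only that both $F$ and $F'$ are symmetric tensor functors and that $[1,0]$ generates $\mathcal{T}_t$ under tensor product, duals, direct summands, and sums.

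For surjectivity, given $X\in\mathcal D$ with $\dim X=t$, I would construct $F$ by first defining it on $\mathcal{T}_t$: set $F([r,s]):=X^{\otimes r}\otimes (X^*)^{\otimes s}$, and on morphisms use the obvious diagrammatic description. Concretely, $\Hom_{\mathcal T_t}([r,s],[r',s'])$ has a basis of walled Brauer diagrams (pairings/permutations between $r+s'$ upper and $r'+s$ lower endpoints respecting the $V$/$V^*$ wall), with composition involving a factor of $t$ for each closed loop; each such diagram has a canonical image in $\mathcal D$ built from the braiding $c_{X,X}$, the duality maps $\mathrm{ev}_X,\mathrm{coev}_X$, and identities, and the corresponding loop in $\mathcal D$ evaluates to $\dim X=t$. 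Because the relations between diagrams in $\mathcal T_t$ are precisely those forced by rigidity, symmetry, and the value $t$ of closed loops, they are automatically satisfied in $\mathcal D$, so $F$ is well-defined and is a symmetric tensor functor. Extending to the Karoubian envelope $\Rep(GL_t)$ is formal.

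The main obstacle is justifying the walled-Brauer-diagram description of $\Hom_{\mathcal T_t}([r,s],[r',s'])$ together with the claim that the only relations involve closing loops into factors of $t$; this is exactly the content of the construction in \cite{D2} (and is essentially forced by the stated condition $\En(V^{\otimes m})=\C[S_m]$ combined with rigidity). Once this description is in hand, both the existence and the uniqueness of $F$ are immediate, and checking that $F\mapsto F([1,0])$ and $X\mapsto F_X$ are mutually inverse up to isomorphism is routine.
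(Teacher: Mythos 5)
The paper does not prove this statement: it is quoted verbatim from \cite{E} (Theorem 2.9) and attributed to Deligne \cite{D2}, so there is no in-paper argument to compare yours against. Judged on its own, your outline follows the standard route to the universal property and the overall architecture (well-definedness via preservation of categorical dimension, injectivity by propagating an isomorphism on the generator, surjectivity by evaluating walled Brauer diagrams on $X$, formal extension to the Karoubi envelope) is correct.

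The one substantive caveat is that the entire weight of the theorem sits on the step you yourself flag as ``the main obstacle'': the identification of $\Hom_{\mathcal T_t}([r,s],[r',s'])$ with the span of walled Brauer diagrams subject \emph{only} to the loop-evaluation relation, i.e.\ the fact that $\Rep(GL_t)$ is (the Karoubi envelope of) the \emph{free} $\C$-linear rigid symmetric monoidal category on a dimension-$t$ object. You defer this to \cite{D2}, but that is precisely the content of the theorem being proved, so as written the argument is circular rather than merely incomplete. Note also that the paper's definition of $\Rep(GL_t)$ (``the rigid symmetric category generated by $V$ with $\En(V^{\otimes m})=\C[S_m]$'') does not by itself hand you the diagram category: you would need to argue that the condition $\En(V^{\otimes m})=\C[S_m]$ forces linear independence of all walled Brauer diagrams (via the rigidity isomorphisms $\Hom([r,s],[r',s'])\simeq\Hom([r+s',0],[r'+s,0])$) and, conversely, that no relations beyond the diagrammatic ones hold — otherwise the functor $F$ you build from an arbitrary $X$ of dimension $t$ need not descend. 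If you supply that presentation (e.g.\ by constructing the oriented Brauer category directly and checking it satisfies the defining conditions), the rest of your argument goes through as you describe.
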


It turns out that for non-integer values of $t$ the categories $\Rep(GL_t)$ are abelian and semisimple (see for example \cite{EGNO}, Subsection 9.12).

We will now state the result showing that $\cc \coloneqq \Rep(GL_t)$ can be constructed as a subcategory in the ultraproduct of the categories $\cc_n$, where $\cc_n$ is the category of finite-dimensional representations of $GL_n$. For more details on the ultrafilters and ulraproducts  see \cite{HK} or \cite{S}. A very detailed and nice explanation of the following construction can be found in \cite{K}. The original statement for transcendental $t$ is due to Pierre Deligne \cite{D2} (but is left without proof). And the similar statement for all values of $t$ (requiring passing to positive characteristics) was proved in \cite{H} by Nate Harman.

Let $\cf$ be a nonprincipal ultrafilter on $\mathbb N$. We will fix some isomorphism of fields $ \prod_\cf \mathbbm k\simeq\mathbb C$.

\begin{theorem}{ \cite{D2}, \cite{H}}
\label{t1}
$\cc$ is equivalent to the full subcategory $\widetilde\cc$ in $\prod_{\mathcal F} \cc_n$ generated by $\widetilde V\coloneqq \prod_\cf V^{(n)}$ under the operations of taking duals, tensor products, direct  sums, and direct summands if $t$ is the image of $\prod_\cf  n$  under the isomorphism $\prod_\cf  \mathbbm{k} \simeq \C$.
\end{theorem}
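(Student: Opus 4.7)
The plan is to invoke the universal property of Theorem \ref{tuniversalprop}. The ultraproduct $\prod_\cf \cc_n$ is a rigid symmetric tensor category over the base field $\prod_\cf \ck\simeq\C$, and $\widetilde{\cc}$ is by construction a full rigid symmetric tensor subcategory closed under duals, tensor products, sums, and summands. The categorical dimension of $\widetilde V$ is the ultraproduct of the dimensions of the $V^{(n)}$, i.e.\ the image of $\prod_\cf n$ in $\C$, which equals $t$ by hypothesis. Theorem \ref{tuniversalprop} then produces a symmetric tensor functor $F:\Rep(GL_t)\to\widetilde{\cc}$ with $F([1,0])=\widetilde V$. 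The theorem reduces to showing that $F$ is an equivalence.

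For full faithfulness it suffices, since every object of $\Rep(GL_t)$ is a summand of a sum of objects $[r,s]$, to show that for all $r_i,s_i\ge 0$ the map
\[
F_*:\Hom_{\Rep(GL_t)}([r_1,s_1],[r_2,s_2])\longrightarrow \Hom_{\widetilde{\cc}}\bigl(\widetilde V^{\otimes r_1}\otimes(\widetilde V^*)^{\otimes s_1},\,\widetilde V^{\otimes r_2}\otimes(\widetilde V^*)^{\otimes s_2}\bigr)
\]
is an isomorphism. By the standard description of $\Rep(GL_t)$, the source is the walled Brauer space, of dimension $(r_1+s_2)!$ when $r_1-s_1=r_2-s_2$ and zero otherwise. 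By classical Schur--Weyl / walled Brauer duality for $GL_n$, the analogous Hom space in $\cc_n$ has exactly the same dimension for all sufficiently large $n$ (hence for $\cf$-almost all $n$). Because these dimensions are uniformly bounded, the target Hom space identifies with the ultraproduct $\prod_\cf \Hom_{\cc_n}$, itself a $\C$-vector space of that same dimension. Since $F$ is a symmetric tensor functor, it sends each walled Brauer diagram in $\Rep(GL_t)$ to the ultraproduct of the same diagram in $\cc_n$; these form a basis in each factor and hence in the ultraproduct, so $F_*$ is a bijection.

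For essential surjectivity, $\widetilde{\cc}$ is generated from $\widetilde V$ by duals, tensor products, sums, and summands, and the essential image of $F$ plainly contains $\widetilde V$ and is closed under the first three operations. Closure under summands uses full faithfulness together with Karoubian-ness of $\Rep(GL_t)$: any idempotent $e$ on $F(X)$ lifts through $F_*$ to an idempotent $e'$ on $X$, whose splitting in $\Rep(GL_t)$ is sent by $F$ to a splitting of $e$, exhibiting the summand as lying in the essential image.

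The main obstacle is the identification of Hom in the ultraproduct category with the ultraproduct of Homs. One needs the lemma that if $\dim_\ck \Hom_{\cc_n}(X_n,Y_n)$ is uniformly bounded in $n$, then the canonical map $\prod_\cf \Hom_{\cc_n}(X_n,Y_n)\to\Hom_{\prod_\cf \cc_n}(\prod_\cf X_n,\prod_\cf Y_n)$ is an isomorphism of modules over $\prod_\cf\ck$; this follows from the fundamental theorem on ultraproducts applied to the first-order data of a finite spanning set of morphisms. Once this interchange is granted, the combination of Schur--Weyl duality in each $\cc_n$ and the universal property yields the equivalence.
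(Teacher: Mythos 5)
Your proposal follows essentially the same route as the paper: invoke the universal property of $\Rep(GL_t)$ to obtain $F$ with $F([1,0])=\widetilde V$, use walled Brauer / Schur--Weyl duality on the generating Hom-spaces to get full faithfulness, and conclude essential surjectivity from closure under duals, tensors, sums, and (via Karoubian-ness) summands. If anything you are slightly more careful than the paper on two points: you establish full faithfulness before essential surjectivity, which is the logically correct order since closure of the essential image under direct summands uses fullness to lift idempotents; and you isolate the ultraproduct-Hom interchange lemma (that uniformly bounded Hom-dimensions force $\Hom_{\prod_\cf\cc_n}$ to equal $\prod_\cf\Hom_{\cc_n}$) that the paper uses implicitly.
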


The proof of this statement is almost identical to the proof of Theorem 1.1 in \cite{H} or Theorem 1.4.1 in \cite{HK}. 

\begin{proof}
Clearly, the categorical dimension of $\widetilde V$ is $t$. Therefore, by Theorem \ref{tuniversalprop}, we get a symmetric tensor functor $F:\cc \to \widetilde \cc$ with $F([1,0]) = \widetilde V$. Now, the category $\Rep(GL_t)$ is generated by $[1,0]$ under the operations of taking duals, tensor products, direct  sums, and direct summands. Thus, $F$ is essentially surjective. It is left to show that it is fully faithful. Since both $\cc$ and $\widetilde \cc$ are Karoubian envelopes of additive categories generated by $[r,s]$ and $\widetilde V^{\otimes r}\otimes(\widetilde V^*)^{\otimes s} $ correspondingly, it is enough to check that $F$ induces an isomorphism of algebras 
$$
\En_\cc([r,s]) \to \En_{\widetilde \cc}(\widetilde V^{\otimes r}\otimes(\widetilde V^*)^{\otimes s}).
$$
But this is an easy consequence of the Schur-Weyl duality, since both algebras are isomorphic to the walled Brauer algebra $B_{r,s}(t)$ (it is ensured for the ultraproduct, since it holds for $\En_{GL_n}((V^\nn)^{\otimes r}\otimes ((V^\nn)^*)^{\otimes s})$  for all  $n>r+s$).
\end{proof}

\begin{remark}
Clearly, one can only obtain transcendental numbers $t$ as the image of $\prod_{\cf} n$. To get this construction for algebraic $t$, one would need to consider the representations of $GL_n$ over some fields of positive characteristic (see \cite{H}).

However, applying automorphisms of $\C$ over $\mathbbm k$, one can show that any transcendental $t$ can be obtained in this manner.
\end{remark}

So, from now on we assume for simplicity that $t$ is non-algebraic. We expect, however, that similar results hold for all non-integer values of $t$. 

Let us denote by $\Ind(\cc)$ the category of ind-objects (i.e. filtered colimits of regular objects) of $\cc$.

\begin{definition}\label{dliealg}
Let $\g=\gl_t=V\otimes V^*$. It is a Lie algebra in $\cc$. Let us denote the commutator map $\g\otimes \g \to \g$ by $c$. We define its universal enveloping algebra $U(\g)\in \Ind(\cc)$ as a quotient of the tensor algebra $T(\g)\coloneqq \bigoplus_{k=0}^\infty \g^{\otimes k}$ by the ideal generated by the image of the map
$$
r:\g\otimes \g \to \g\oplus (\g\otimes \g) \subset T(\g),
$$
$$
r = c\oplus (\sigma_\g - id_\g),
$$
where $\sigma_\g: \g\otimes\g\to\g\otimes\g$ is the permutation of tensor factors. 
\end{definition}

For any object $X=\prod_\cf X^\nn$ of $\cc$ we can define an action map
$$
a_X: \g\otimes X\to X
$$
as the ultraproduct $\prod_\cf a_{X^\nn}$, where $a_{X^\nn}:\gl_n(\ck)\otimes X^\nn\to X^\nn$ is the natural action of $\gl_n(\ck)$ on $X^\nn$. Clearly, $a_X$ is a Lie algebra action, i.e. 
$$
(a_X\oplus (a_X\circ (id_\g\otimes a_X)))\circ (r\otimes id_X) = 0
$$
as a map from $\g\otimes\g\otimes X$ to $X$ (where $r$ is the map $\g\otimes \g \to \g\oplus(\g\otimes \g)$ defined in \ref{dliealg}). That is, $a_X$ induces the action of $U(\g)$ on $X$. We will refer to this action as the \textit{natural} action of $\g$ (or $U(\g)$).

 Clearly, the natural action of $\g$ on $\cc$ extends to $\Ind(\cc)$.

 We define the Poincar\'e–Birkhoff–Witt (or PBW) filtration $F$ on $U(\g)$ as the image of $T^i(\g)\coloneqq \g^{\otimes i}$ under the quotient map $T(\g)\to U(\g)$. We have 
$$
F_iU(\g)\coloneqq \prod_\cf F_i U(\gl_n(\mathbb k)),
$$
where we abuse the notation and denote by $F$ the PBW-filtration on $U(\gl_n(\mathbb k))$ as well.

The center of $U(\g)$
$$
Z(U(\g))= U(\g)^{GL_t}=\Hom(\mathbb{1},U(\g))=\bigcup_i \Hom(\mathbb{1},F_i U(\g))
$$
is a filtered algebra in the category of $\C$-vector spaces. We have 
$$
F_i Z(U(\g))\coloneqq  Z(U(\g))\cap F_i U(\g) = \prod_\cf (Z(U(\gl_n(\mathbb{k})))\cap F_iU(\gl_n(\mathbb{k}))).
$$

The Harish-Chandra isomorphism tells us that $Z(U(\gl_n(\ck))$ is isomorphic to the algebra of symmetric polynomials $\ck[\mathfrak h^*]^W$ with filtration given by the degree. Given a central element $C$ we recover the corresponding symmetric polynomial $p$ by looking at the action of $C$ on the Verma module $M_\lam \coloneqq U(\gl_n(\ck))\tens{U(\mathfrak b)}\ck_{\lam-\rho}$ with the highest weight $\lam-\rho\in \h^*$. We have $p(\lam) = C|_{M_\lam}$.

\begin{remark}
The shift of the highest weight in the definition of $M_\lam$ is needed to replace the dot-action of $W$ on $\h^*$ with the usual action.
\end{remark}

Symmetric polynomials in $n$ variables are freely generated (as an algebra) by the first $n$ power sums polynomials $p_k \coloneqq \sum_i x_i^k$. For any $k$ and any $n$ we define the element $C_k\in Z(U(\gl_n(\mathbb{k})))$ to be the image of $p_k$ under the isomorphism $\mathbb k[x_1,\ldots,x_n]^{S_n}\to Z(U(\gl_n(\mathbb{k})))$. I.e. $C_k$ is the central element, which acts on each $M_\lam$ via the constant $\sum_i \lam_i^k$.

We get that $Z(U(\gl_n(\ck)))\simeq \ck[C_1,\ldots, C_n]$ with $\deg C_k=k$. And therefore, 
$$
Z(U(\g))\simeq \mathbb C[C_1, C_2, \ldots],
$$
$$
C_i \coloneqq  \prod_\cf C_i  \text{ (by the abuse of notation).}
$$

\begin{definition}
\label{d1}
A central character is an algebra homomorphism 
$$
\psi: Z(U(\g))\to \mathbb C.
$$ 
By the result above, $\psi$ is completely determined by the numbers $\psi_k=\psi(C_k)$. For convenience let us adopt the (exponential) generating function notation $\psi(u)=\frac{1}{(e^u-1)} \sum \frac{1}{k!}\psi_k u^k\in\mathbb C((u))$, where we put $\psi_0=1$.
\end{definition}

\begin{remark}
We divide the generating function by $(e^u-1)$ only for the reason that it yields better looking formulas, and we treat the factor $\frac{1}{(e^u-1)}$ formally.
\end{remark}

For each $\gl_n(\mathbb{k})$ let us fix a central character $\psi^{(n)}: Z(U(\gl_n(\mathbb{k})))\to\mathbb{k}$. It is determined by $n$ numbers $\psi_k^{(n)}=\psi^{(n)}(C_k)$ with $1\le k \le n$. However, since the elements $C_k$ are defined for all $k$, we can define the (exponential) generating function $\psi^{(n)}(u)= \frac{1}{(e^u-1)}\sum \frac{1}{k!} \psi^{(n)}_k u^k\in \mathbb{k}((u))$. Due to the algebraic independence of $\{C_k\}_{k\le n}$ in $Z(U(\gl_n(\mathbb{k})))$ we can choose $\psi_k^{(n)}$ to be an arbitrary number for $n>k$. Thus, any central character $\psi$ of $\g$ can be obtained as an ultraproduct of central characters $\psi^\nn$ of $\gl_n(\ck)$ (i.e. $\psi_k = \prod_\cf \psi^\nn_k$). We write $\psi = \prod_\cf \psi^\nn$.

\section{The category $\hc_{\chi,\psi}$}

\subsection{Definitions and notations}

We will henceforth consider objects with the action of $\g\oplus\g$. Let us denote by $\g_l,~ \g_r$ and $\g_d$ the left, the right and the diagonal copy of $\g$ inside $\g\oplus\g$ (with the diagonal copy being the image of the map $id_\g\oplus id_\g: \g\to \g\oplus \g$). Moreover, we will freely switch between the left action of $\g\oplus\g$ and the double action of $\g$ both on the left and on the right, the first given by the action of $\g_l$ and the second by minus the action of $\g_r$.

Clearly, any left $\g$-module $M$ in $\Ind(\cc)$ with the action map $f: \g\otimes M\to M$ has a unique action of $\g\oplus\g$, s.t.  $\g_l$ acts by the original action and the diagonal copy $\g_{d}$ acts naturally, i.e. via the map $a_M$. Indeed, one defines the new action map $(\g\oplus\g)\otimes M\to M$ as $f\oplus (a_M-f)$.   

\begin{definition}\label{dhcbimodules}
A Harish-Chandra bimodule for $GL_t$ is a $\g\oplus \g$-module  $M\in\Ind(\cc)$, such that
\begin{enumerate}
    \item $M$ is finitely generated, i.e. it is a quotient of $(U(\g)\otimes U(\g))\otimes X$ for some $X\in\cc$;
    
    \item $\g_d$ acts naturally (i.e. via $a_M$) on $M$;
    
    \item the center $Z(U(\g)\otimes U(\g))$ acts locally finitely on $M$, i.e. $\mathrm{Ann}_{ Z(U(\g)\otimes U(\g))}M$ is an ideal of finite codimension.
\end{enumerate}

\end{definition}

Any maximal ideal in $Z(U(\g)\otimes U(\g))$ (i.e. ideal of codimension 1) corresponds to a pair of central characters $\chi,\psi: Z(U(\g))\to \C$. Denote by $\hc_{\chi,\psi}$ the category of Harish-Chandra bimodules on which $Z(U(\g_l))$ acts via $\chi$ and $Z(U(\g_r))$ acts via $\psi$.

\begin{ex}
Let $U_\chi \coloneqq U(\g)/(z-\chi(z))$, where $z$ runs over all elements in the center. Clearly, $U_\chi\in\hc_{\chi,\chi}$.
\end{ex}
\begin{note}
For each $i$  we have $F_i U_\chi=\prod_\cf F_i U(\gl_n(\mathbb{k}))/(z-\chi^{(n)}(z))$, where $\chi = \prod_\cf \chi^{(n)}$.\footnote{We will abuse the notation and denote by $U_\chi$ the quotient $U(\gl_n(\ck))/(z-\chi(z))$ too, when it is clear from the context which one we refer to.}
\end{note}

Clearly, every irreducible Harish-Chandra bimodule must lie in one of the categories $\hc_{\chi,\psi}$, so they are interesting to study.

\begin{remark}\label{raboutclassichc}
Note that for regular Harish-Chandra bimodules (for $GL_n$) condition $(2)$ of Definition \ref{dhcbimodules}  translates to the fact that one can integrate the action of the diagonal copy $(\gl_n)_d$ on $M$ to the action of $GL_n$; and condition $(3)$ is equivalent to $M$ having a \textit{finite $K$-type}, which means that for any simple $GL_n$ representation $L$ the multiplicity space $\Hom_{GL_n}(L,M)$ is finite-dimensional (see \cite{BG}, Proposition 5.3). However, in $\Rep(GL_t)$ the latter is no longer true. Harish-Chandra bimodules of finite $K$-type are studied to some extent in \cite{U}. One can also find there some non-trivial examples of irreducible Harish-Chandra bimodules.
\end{remark}

For any object $X\in\cc$ the tensor product $U_\psi\otimes X$ is naturally a left $\g$-module with $\g$ acting on $U_\psi$ by multiplication on the left and on $X$ naturally (so it is easy to deduce that the right action of $\g$ is by (minus) multiplication on the right and affects only the $U_\psi$ part of the tensor product). Let $N(\chi,\psi, X):=(U_\psi\otimes X)_\chi$ denote the  quotient  $(U_\psi\otimes X)/(z-\chi(z))(U_\psi\otimes X)$, where we factor out by the action of the left copy of $Z(U(\g))$. Clearly, $N(\chi,\psi, X)\in\hc_{\chi,\psi}$.

\subsection{Bimodules $N(\chi,\psi,X)$}

\begin{lemma}(see \cite{E} Section 3.6) 
The category $\hc_{\chi,\psi}$ is nonzero iff  $N(\chi,\psi, X)\neq 0$ for some $X\in\cc$.
\end{lemma}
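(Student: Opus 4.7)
The reverse implication is immediate: every nonzero $N(\chi,\psi,X)$ is already an object of $\hc_{\chi,\psi}$.

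For the forward direction, my plan is to establish the adjunction
\[
\Hom_{\hc_{\chi,\psi}}\bigl(N(\chi,\psi,X),\, M\bigr) \;\cong\; \Hom_{\Ind(\cc)}(X,M)
\]
natural in $X\in\cc$ and $M\in\hc_{\chi,\psi}$, with no equivariance requirement on the right-hand side. Granting this, given a nonzero $M\in\hc_{\chi,\psi}$ I would write $M$ as a filtered colimit of objects of $\cc$, choose some $X\in\cc$ admitting a nonzero morphism $\iota\colon X\to M$ (such an $X$ exists because $M\neq 0$), and read off a nonzero bimodule map $\bar\phi\colon N(\chi,\psi,X)\to M$; in particular $N(\chi,\psi,X)\neq 0$.

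The inverse of the restriction map $\bar\phi\mapsto\bar\phi|_{1\otimes X}$ would send $\iota\colon X\to M$ to the morphism $\phi\colon U(\g)\otimes X\to M$ defined by the Hopf-algebraic twist
\[
\phi(u\otimes x) \;=\; \sum u_{(1)}\cdot_l \iota\bigl(a_X(S(u_{(2)}),x)\bigr),
\]
where $S$ is the antipode of $U(\g)$ and $\cdot_l$ denotes the $\g_l$-action on $M$. A direct Sweedler computation (using $\Delta(\xi)=\xi\otimes 1+1\otimes\xi$ and $S(\xi)=-\xi$ for $\xi\in\g$) verifies $\g_l$-equivariance of $\phi$ for the ``tensor'' action $\xi\cdot(u\otimes x)=\xi u\otimes x + u\otimes a_X(\xi,x)$ on $U(\g)\otimes X$. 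Any morphism in $\Ind(\cc)$ automatically intertwines the natural $\g$-actions, so $\phi$ respects $\g_d$ and hence $\g_r=\g_d-\g_l$, making it a bimodule map. The hypotheses on central characters then make $\phi$ annihilate $(z-\psi(z))\cdot_r(U(\g)\otimes X)$ and $(z-\chi(z))\cdot_l(U(\g)\otimes X)$, so it descends to $\bar\phi$ and $\bar\phi(\overline{1\otimes x})=\iota(x)$ is nonzero whenever $\iota$ is.

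The hardest part will be checking that the twist formula is consistent, i.e.\ that it actually assembles to a morphism in $\Ind(\cc)$ respecting the relations of $U(\g)$. The cleanest route is to first establish, internally in $\cc$, the Hopf-algebraic isomorphism $u\otimes x\mapsto\sum u_{(1)}\otimes u_{(2)}\cdot x$ identifying the free $\g_l$-action on $U(\g)\otimes X$ with the tensor $\g_l$-action; once this is in place the desired adjunction collapses to the usual free-module adjunction and the remainder of the argument is purely formal.
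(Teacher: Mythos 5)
Your argument is correct and rests on the same mechanism as the paper's proof: the Hopf-algebraic identification of $U_\psi\otimes X$ with a module that is ``free'' in $X$ over the diagonal copy $\g_d$. The paper states this as an isomorphism of bimodules $N(\chi,\psi,X)\simeq (U_\chi\otimes U_\psi^{op})\tens{U(\g_d)}X$ (with $\g_d$ acting on the right of $U_\chi\otimes U_\psi^{op}$), then takes $X$ to be a generating subobject of $M$ (using finite generation) to obtain a surjection $N(\chi,\psi,X)\twoheadrightarrow M$; you instead package the isomorphism as the adjunction $\Hom_{\hc_{\chi,\psi}}(N(\chi,\psi,X),M)\cong\Hom_{\Ind(\cc)}(X,M)$ via the explicit antipode twist, and need only a nonzero $\iota\colon X\to M$ rather than a generator, which sidesteps the finite-generation hypothesis entirely. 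The key observation you correctly exploit in both halves --- that any morphism in $\Ind(\cc)$ automatically intertwines the natural $\g$-action, so $\g_d$-equivariance is free --- is exactly what makes the right-hand side of your adjunction collapse to an unconstrained $\Hom$ in $\Ind(\cc)$, matching the paper's use of $\tens{U(\g_d)}$.
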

\begin{proof}
For any $X\in\cc$ we have that as $(\g,\g)$-bimodules $$N(\chi,\psi,X) \coloneqq U_{\chi}\tens{U(\g_{l})}(U_\psi\otimes X)\simeq(U_{\chi}\otimes U_{\psi}^{op})\tens{U(\g_{d})} X,$$ where $\g_d$ acts on the right on $U_\chi\otimes U_\psi^{op}$. 

Now suppose $M\in\hc_{\chi,\psi}$, then $M$ is finitely generated, i.e. it is generated by some subobject $X$ (that lies in $\cc$). There is a natural morphism $N(\chi, \psi, X)\to M$ whose image is the subbimodule of $M$ generated by $X$ (since $X$ generates $M$, it is surjective). Thus, if $M$ is nonzero then so is $N(\chi,\psi, X)$.
\end{proof}

\begin{corollary}
 The category $\hc_{\chi,\psi}$ is nonzero iff  $N(\chi,\psi, [r,s])\neq 0$ for some $[r,s]$.
\end{corollary}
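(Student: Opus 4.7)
The plan is to deduce this corollary from the preceding lemma by showing that the assignment $X \mapsto N(\chi,\psi,X)$ is an additive functor from $\cc$ to $\hc_{\chi,\psi}$, and then invoking the fact that every object of $\cc = \Rep(GL_t)$ is (isomorphic to) a direct summand of a finite direct sum of objects of the form $[r,s]$. By the lemma, $\hc_{\chi,\psi}\neq 0$ iff $N(\chi,\psi,X)\neq 0$ for some $X\in\cc$, so it suffices to pass from an arbitrary $X$ to one of the generating objects $[r,s]$.

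For additivity, I would use the presentation $N(\chi,\psi,X)\simeq (U_\chi\otimes U_\psi^{op})\tens{U(\g_d)} X$ given in the proof of the lemma. Tensoring with the fixed $(\g,\g)$-bimodule $U_\chi\otimes U_\psi^{op}$ over $U(\g_d)$ is a right-exact additive functor (it is a relative tensor product, which commutes with colimits on the right), so it takes finite direct sums in $\cc$ to direct sums in $\hc_{\chi,\psi}$ and sends direct summand inclusions to direct summand inclusions. Hence for any finite decomposition $Y=\bigoplus_i [r_i,s_i]$ we get $N(\chi,\psi,Y)\simeq \bigoplus_i N(\chi,\psi,[r_i,s_i])$, and if $X$ is a direct summand of $Y$, then $N(\chi,\psi,X)$ is a direct summand of $N(\chi,\psi,Y)$.

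Next I would invoke the universal description of $\cc$: by Theorem \ref{tuniversalprop} the category $\Rep(GL_t)$ is the Karoubian envelope of the additive category generated by the single object $V=[1,0]$ under duals and tensor products, so every $X\in\cc$ is a direct summand of a finite direct sum $Y=\bigoplus_i [r_i,s_i]$. Combining this with the previous paragraph, if $N(\chi,\psi,X)\neq 0$ then $N(\chi,\psi,Y)=\bigoplus_i N(\chi,\psi,[r_i,s_i])\neq 0$, so $N(\chi,\psi,[r_i,s_i])\neq 0$ for at least one index $i$. The converse implication is immediate from the lemma.

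There is no serious obstacle here; the only thing to verify carefully is the additivity claim, which could be rephrased concretely as the statement that the relative tensor product $(U_\chi\otimes U_\psi^{op})\tens{U(\g_d)}(-)$ commutes with finite direct sums and preserves split monomorphisms, both of which are formal.
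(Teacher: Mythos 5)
Your proof is correct and is the natural way to fill in the argument; the paper leaves this corollary without proof, treating it as immediate, and your two ingredients (additivity of $X\mapsto N(\chi,\psi,X)$, plus every object of $\cc$ being a direct summand of a finite direct sum of objects $[r,s]$) are exactly what is implicitly used. One small misattribution: the fact that every object of $\cc$ is a summand of a finite sum of the $[r_i,s_i]$ follows from the definition of $\Rep(GL_t)$ as the Karoubian envelope of the tensor category generated by $V$ (equivalently from Theorem \ref{t1}), not from Theorem \ref{tuniversalprop}, which only records the universal property.
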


So, we want to understand for which $\chi,\psi$ the bimodule $(U_\psi\otimes[r,s])_\chi$ is not zero. For this purpose let us first look at $(U_{\psi}\otimes V)_\chi$ and understand for which $\chi$ and $\psi$ it is not zero.

\subsection{The basic case for $\gl_n(\ck)$}
Let us look at the finite-dimensional case, namely $\gl_n(\mathbb{k})$. We have a homomorphism $U_\psi\to \En_\ck(M_\lam)$ for some $\lam=(\lam_1,\ldots, \lam_n)$ (determined up to permutation of $\lam_i$) which is injective by Duflo's theorem (which states that the annihilator of a Verma module is the ideal generated by the kernel of the corresponding central character, see \cite{Dix}, Theorem 8.4.3). So there is an injective morphism of $\gl_n(\mathbb{k})$-bimodules $$U_\psi\otimes V^\nn \hookrightarrow \Hom_{\mathbb k}(M_\lam, M_\lam\otimes V^\nn),$$ where the right action of $\gl_n(\mathbb{k})$ is on the source and the left action is on the target. Thus, it is enough to consider the action of the center on $M_\lam\otimes V^\nn$ (instead of considering the left action on $U_\psi\otimes V^\nn$). For generic (without nontrivial stabilizers in $W$) weight $\lambda$ we have $M_\lam\otimes V^\nn=\bigoplus_{i=1}^n M_{\lam+e_i}$, where $\lam+e_i=(\lam_1,\ldots,\lam_{i-1},\lam_i+1,\lam_{i+1},\ldots,\lam_n)$. When restricted to the direct summand $M_{\lam+e_l}$, each $C_k$ acts by $\sum_{i=1}^n \lam_i^k + (\lam_l+1)^k-\lam_l^k$.

Let $\Omega:=\frac{1}{2}(\Delta(C_2)-C_2\otimes 1 - 1\otimes 1)$. Then $$(2\Omega+1)|_{M_{\lam+e_l}} = 2\lam_l+1$$ and thus, $$(\Delta(C_k)-C_k\otimes 1)|_{U_\psi\otimes V^\nn}=(\Omega|_{U_\psi\otimes V^\nn}~+~1)^k-\Omega^k|_{U_\psi\otimes V^\nn}.$$ Let us denote $P_k(b)\coloneqq (b+1)^k-b^k$. Then, we have proved the following lemma:
\begin{lemma}
For generic central character $\psi$, i.e. such that $U_\psi$ acts on $M_\lam$ with generic $\lam$, we have: $$\Delta(C_k)|_{U_{\psi}\otimes V^\nn}-C_k\otimes 1|_{U_{\psi}\otimes V^\nn}=P_k(\Omega|_{U_\psi\otimes V^\nn}).$$
\end{lemma}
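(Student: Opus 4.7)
The plan is to reduce the identity to a diagonalizable-operator calculation on a direct sum of Verma modules, where the scalar action of $C_k$ is transparent. First I would use Duflo's theorem to pick $\lambda=(\lambda_1,\dots,\lambda_n)$ with $\psi=\psi_\lambda$ and an injection $U_\psi\hookrightarrow\En_\ck(M_\lambda)$. Tensoring with $V^\nn$ gives an embedding $U_\psi\otimes V^\nn\hookrightarrow \Hom_\ck(M_\lambda, M_\lambda\otimes V^\nn)$ in which the left $\gl_n$-action on the source corresponds to postcomposition with the left action on the target. Thus every identity of operators that holds on $M_\lambda\otimes V^\nn$ (as a left $\gl_n$-module) pulls back to the desired identity on $U_\psi\otimes V^\nn$; this is the key trick that lets us replace an abstract statement about $U_\psi$ with a concrete spectral computation.

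Next, for $\lambda$ generic (no nontrivial $W$-stabilizer, and all weights $\lambda+e_i$ in distinct $W$-orbits), the standard tensor-product rule gives $M_\lambda\otimes V^\nn=\bigoplus_{l=1}^n M_{\lambda+e_l}$. On the summand $M_{\lambda+e_l}$, the element $C_k$ acts by the scalar $\sum_i\lambda_i^k+(\lambda_l+1)^k-\lambda_l^k$, while $C_k\otimes 1$ acts (through the left tensor factor) by $\sum_i\lambda_i^k$. Subtracting,
\begin{equation*}
\bigl(\Delta(C_k)-C_k\otimes 1\bigr)\big|_{M_{\lambda+e_l}}=(\lambda_l+1)^k-\lambda_l^k=P_k(\lambda_l).
\end{equation*}
So it suffices to identify a left-$\gl_n$-equivariant operator on $M_\lambda\otimes V^\nn$ that acts on $M_{\lambda+e_l}$ by the scalar $\lambda_l$; then by a polynomial calculus in that operator both sides of the claim agree summand by summand.

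The operator $\Omega=\tfrac12(\Delta(C_2)-C_2\otimes 1-1\otimes C_2)$ is the natural candidate, since it is the quadratic part of the comultiplication and commutes with $\g_d$, hence is left-$\g$-equivariant on $M_\lambda\otimes V^\nn$ via the left action. A short direct computation on $M_{\lambda+e_l}$, using the formulas for $C_2$ above with $k=2$, gives $(2\Omega+1)|_{M_{\lambda+e_l}}=2\lambda_l+1$, so $\Omega|_{M_{\lambda+e_l}}=\lambda_l$. Since the $\lambda_l$ are distinct by genericity, $\Omega$ is diagonalizable on $M_\lambda\otimes V^\nn$ with eigenspaces exactly the Verma summands, and any polynomial $P_k(\Omega)$ acts on $M_{\lambda+e_l}$ by $P_k(\lambda_l)$; combining with the previous paragraph yields the lemma.

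The only subtle point is keeping track of where each operator lives: $\Omega$ must be understood as an operator on the bimodule $U_\psi\otimes V^\nn$ (equivalently on $\Hom_\ck(M_\lambda,M_\lambda\otimes V^\nn)$ via the embedding), not on $M_\lambda\otimes V^\nn$ itself, so I would be careful to phrase the identity as an identity of endomorphisms of $U_\psi\otimes V^\nn$ first, then verify it on the faithful $M_\lambda$-picture. No genuine difficulty is expected beyond that bookkeeping and the standard genericity assumption needed to decompose $M_\lambda\otimes V^\nn$ into distinct Verma blocks.
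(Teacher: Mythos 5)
Your approach is essentially identical to the paper's: Duflo's theorem gives the embedding $U_\psi\otimes V^\nn\hookrightarrow \Hom_\ck(M_\lambda,M_\lambda\otimes V^\nn)$, the generic decomposition $M_\lambda\otimes V^\nn=\bigoplus_l M_{\lambda+e_l}$ reduces everything to a summand-by-summand scalar computation, and the polynomial identity $P_k(\Omega)$ follows once $\Omega$ is shown to act by $\lambda_l$ on $M_{\lambda+e_l}$. One flaw needs fixing: you write $\Omega=\tfrac12\bigl(\Delta(C_2)-C_2\otimes 1-1\otimes C_2\bigr)$, which is the usual tensor Casimir, but the paper's $\Omega$ is $\tfrac12\bigl(\Delta(C_2)-C_2\otimes 1-1\otimes 1\bigr)$, and only the latter satisfies your claimed computation $(2\Omega+1)|_{M_{\lambda+e_l}}=2\lambda_l+1$. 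With your definition one gets instead $2\lambda_l+2-C_2|_{V^\nn}$, and $C_2|_{V^\nn}=\sum_i(e_1+\rho)_i^2\neq 1$ in general (it grows with $n$), so your $\Omega$ differs from the one in the lemma by a nonzero constant. This matters because the lemma is stated for the paper's $\Omega$, and with your definition the conclusion would be off by exactly that constant shift inside $P_k$. Since your final scalar $\lambda_l$ is the right one and the rest of the argument is sound, this reads as a slip in recalling the definition rather than a conceptual gap; still, you should replace $1\otimes C_2$ by $1\otimes 1$ so that the arithmetic you quote actually holds for the operator you wrote down.
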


Now let us identify the $\mathbb k$-algebra $\En_{\mathbb k}(M_\lam)$ with $A\coloneqq \En_{\mathbb k}(U(\n_{{-}}))$ via the natural isomorphism of vector spaces $M_\lam$ and $U(\n_{{-}})$. The representations $U(\gl_n(\mathbb{k})) \to \En_{\mathbb{k} }(M_\lam)$ thus give us a family of algebra homomorphisms to $A$ depending on $\lam$ polynomially.  That is to say, we have a map of algebras $\varphi: U(\gl_n(\mathbb{k})) \to A \otimes  \mathbb{k}[x_1,\ldots,x_n]$, which composed with the quotient map $ \mathbb{k}[x_1,\ldots,x_n]\to  \mathbb{k}$ sending $x_i$ to $\lam_i$, and the natural isomorphism between $A$ and $\En_{\mathbb k}(M_\lam)$, gives us precisely the representation of $\gl_n(\mathbb{k})$ on $M_\lam$. Let us denote by $\mathfrak{m}_\lam$ the maximal ideal generated by $x_i-\lam_i, 1\le i\le n$.

The family of representations of $\gl_n(\mathbb{k})\oplus\gl_n(\mathbb{k})$ on $M_\lam\otimes V^\nn$ for varying $\lam$ (with the left copy acting on $M_\lam$ and the right copy acting on $V^\nn$) produces a map $\varphi\otimes\rho_{V^\nn}: U(\gl_n(\mathbb{k}))\otimes U(\gl_n(\mathbb{k})) \to A\otimes \mathbb{k}[x_1,\ldots,x_n]\otimes \En_{\mathbb k}(V^\nn)$. We have proved above that the image of $\Delta(C_k)-C_k\otimes 1 - P_k(\Omega)$ under $\varphi\otimes\rho_{V^\nn}$ has to lie in  $A\otimes(\bigcap_{\lam\text{ generic}} \mathfrak{m}_\lam)\otimes \En_{\mathbb k}(V^\nn)$. Since the set of $\lam\in \mathbb k^n$ such that some $\lam_i = \lam_j$ for $i\neq j$ (i.e. $\lam$ has a nontrivial stabilizer in $W$ and hence is non-generic by our definition) is closed in $\mathbb k^n$, the intersection $\bigcap_{\lam\text{ generic}} \mathfrak{m}_\lam$ is zero. Thus, $\Delta(C_k)-C_k\otimes 1 - P_k(\Omega)$ acts as zero on ${M_\lam\otimes V^\nn}$ for any (not necessary generic) $\lam$. 

\begin{corollary}\label{cactiononutensv}
For any central character $\psi$ of $U(\gl_n(\mathbb k))$
$$\Delta(C_k)|_{U_{\psi}\otimes V^\nn}-C_k\otimes 1|_{U_{\psi}\otimes V^\nn}=P_k(\Omega|_{U_\psi\otimes V^\nn}).$$
\end{corollary}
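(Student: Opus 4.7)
The plan is to upgrade the preceding lemma, which handles only generic central characters, to arbitrary $\psi$ in two steps: first extend from generic $\lam$ to all $\lam$ by a Zariski density argument exploiting polynomial dependence on $\lam$, and then transfer the statement from $M_\lam \otimes V^\nn$ to $U_\psi \otimes V^\nn$ using Duflo's theorem.

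For the first step, I would identify $M_\lam$ with $U(\n_{-})$ as a $\ck$-vector space, so that the $\gl_n(\ck)$-action assembles into a single algebra homomorphism
$$
\varphi: U(\gl_n(\ck)) \longrightarrow A \otimes \ck[x_1,\ldots,x_n],\qquad A = \En_\ck(U(\n_{-})),
$$
depending polynomially on the parameters $x_i$, with the specialization $x_i \mapsto \lam_i$ recovering the representation on $M_\lam$. Tensoring with $\rho_{V^\nn}$ gives $\Phi = \varphi \otimes \rho_{V^\nn}$ describing the joint action of $\gl_n(\ck) \oplus \gl_n(\ck)$ on $M_\lam \otimes V^\nn$. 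The previous lemma says that $\Phi(\Delta(C_k) - C_k \otimes 1 - P_k(\Omega))$ lies in $A \otimes \mathfrak{m}_\lam \otimes \En_\ck(V^\nn)$ whenever $\lam$ has pairwise distinct coordinates. The locus of such $\lam$ is Zariski-dense in $\ck^n$ (its complement is a finite union of hyperplanes), so the intersection of the corresponding $\mathfrak{m}_\lam$ is zero, forcing the element itself to be zero in the codomain. Thus $\Delta(C_k) - C_k \otimes 1 - P_k(\Omega)$ acts by zero on $M_\lam \otimes V^\nn$ for \emph{every} $\lam$.

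For the second step, I would invoke Duflo's theorem: for $\lam$ with associated central character $\psi$, the kernel of $U(\gl_n(\ck)) \to \En_\ck(M_\lam)$ is exactly the two-sided ideal generated by $\ker\psi \subset Z(U(\gl_n(\ck)))$. This gives an injection $U_\psi \hookrightarrow \En_\ck(M_\lam)$ of $\gl_n(\ck)$-bimodules, and tensoring with $V^\nn$ yields an embedding
$$
U_\psi \otimes V^\nn \hookrightarrow \Hom_\ck(M_\lam, M_\lam \otimes V^\nn)
$$
intertwining the left $\gl_n(\ck)\oplus \gl_n(\ck)$-action on both sides (the right action on $M_\lam$ in the target corresponds to the right multiplication on $U_\psi$ in the source). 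Since every central character of $U(\gl_n(\ck))$ is realized by some Verma module via the Harish-Chandra isomorphism, the vanishing established in the first step pulls back along this embedding to the desired identity on $U_\psi \otimes V^\nn$.

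The main potential obstacle is purely bookkeeping: one must confirm that the element $\Omega$ and the polynomial $P_k$ truly correspond to a single element of $A \otimes \ck[x_1,\ldots,x_n] \otimes \En_\ck(V^\nn)$ whose specialization at each $\lam$ gives the operator $P_k(\Omega|_{M_\lam \otimes V^\nn})$, so that Zariski density of generic $\lam$ is actually applicable. Once this identification is set up, both steps are essentially formal.
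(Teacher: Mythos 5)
Your proposal reproduces the paper's argument faithfully: the same polynomial family $\varphi: U(\gl_n(\ck)) \to A \otimes \ck[x_1,\ldots,x_n]$, the same Zariski-density observation that $\bigcap_{\lam\text{ generic}}\mathfrak m_\lam = 0$ kills $\Delta(C_k)-C_k\otimes 1 - P_k(\Omega)$ on every $M_\lam\otimes V^\nn$, and the same use of Duflo's theorem to embed $U_\psi\otimes V^\nn$ into $\Hom_\ck(M_\lam, M_\lam\otimes V^\nn)$. The only (harmless) difference is cosmetic: you cleanly separate the two steps, whereas the paper establishes the Duflo embedding first while proving the generic lemma and then applies the density argument; your bookkeeping worry about $P_k(\Omega)$ making sense as a single element of $A\otimes\ck[x_1,\ldots,x_n]\otimes\En_\ck(V^\nn)$ is resolved exactly as you suspect, since $\Omega\in U(\gl_n)\otimes U(\gl_n)$ and $\varphi\otimes\rho_{V^\nn}$ is an algebra map.
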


\begin{remark}
We have $\frac{1}{(e^u-1)} \sum \frac{1}{k!} P_k(b)u^k= e^{bu}$.
\end{remark}

Recall that $\Omega$ was defined as $\frac{1}{2}(\Delta(C_2)-C_2\otimes 1 - 1\otimes 1)$. The element $C_k\in Z(U((\gl_n)_r))$ acts on $U_\psi\otimes V^\nn$ as $C_k\otimes 1$, and $C_k\in Z(U((\gl_n)_l))$ acts on $U_\psi\otimes V^\nn$ as $\Delta(C_k)$. Thus, $\Omega$ acts on $(U_\psi\otimes V^\nn)_\chi$ as a constant $b=\frac{1}{2}(\chi_2-\psi_2-1)$. So, by Corollary \ref{cactiononutensv}, $(U_\psi\otimes V^\nn)_{\chi}\neq 0$ only when $\chi(u)-\psi(u)=e^{bu}$.

Thus, for each $\gl_n(\mathbb{k})$ we must have $\chi^{(n)}(u)-\psi^{(n)}(u)=e^{b_n u}$ for some $b_n\in \mathbb k$. So, in the case of $\gl_t$, $\chi(u)-\psi(u)=e^{b u}$ for $b=\prod_\cf b_n$.

\subsection{The basic case for $\gl_t$}

We have just proved the following statement.

\begin{lemma}
For any central characters $\psi, \chi$ of $U(\g)$ the bimodule $(U_\psi\otimes V)_\chi$ is nonzero only if
$$
\chi(u)-\psi(u) = e^{bu}
$$
for some $b\in\C$.
\end{lemma}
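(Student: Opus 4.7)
The plan is to lift the $\gl_n$ result established in the previous subsection to $\gl_t$ via the ultraproduct realization of Theorem \ref{t1}. First, I would write $\chi = \prod_\cf \chi^{(n)}$ and $\psi = \prod_\cf \psi^{(n)}$ as ultraproducts of central characters of $\gl_n(\ck)$, as discussed after Definition \ref{d1}. The task is then to reduce the nonvanishing of $(U_\psi\otimes V)_\chi$ in $\operatorname{Ind}(\cc)$ to the nonvanishing of $(U_{\psi^{(n)}}\otimes V^{(n)})_{\chi^{(n)}}$ for $\cf$-almost all $n$, at which point the $\gl_n$-case forces $\chi^{(n)}(u)-\psi^{(n)}(u)=e^{b_n u}$ for some $b_n\in\ck$, and taking $b\coloneqq \prod_\cf b_n\in\C$ yields $\chi(u)-\psi(u)=e^{bu}$ by passing the defining identity on power sums through the ultraproduct.

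To carry out the reduction, I would examine the object $(U_\psi\otimes V)_\chi$ one PBW-level at a time. Using the identification from the Note after the Example, each filtration piece $F_iU_\psi$ is the ultraproduct $\prod_\cf F_i(U(\gl_n(\ck))/(z-\chi^{(n)}(z)))$. Tensoring with $V$, which is itself $\prod_\cf V^{(n)}$, and then passing to the quotient by $(z-\chi(z))$ from the left copy of $Z(U(\g))$, the image of $F_i(U_\psi\otimes V)$ in $(U_\psi\otimes V)_\chi$ is the ultraproduct of the corresponding images in $(U_{\psi^{(n)}}\otimes V^{(n)})_{\chi^{(n)}}$. If $(U_\psi\otimes V)_\chi$ is nonzero, then this holds already at some finite $F_i$, and a nonzero ultraproduct requires its factors to be nonzero on a set in $\cf$, giving the desired reduction.

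At that point the previous subsection applies: for each such $n$ one has $\chi^{(n)}(u)-\psi^{(n)}(u)=e^{b_n u}$ for some $b_n\in\ck$. Coefficient by coefficient this says $\chi^{(n)}_k-\psi^{(n)}_k = P_k(b_n)$ up to the $(e^u-1)$ prefactor (where $P_k$ is a fixed polynomial in $b_n$ from the remark after Corollary \ref{cactiononutensv}). Since each $\chi_k$ and $\psi_k$ is the ultraproduct of $\chi^{(n)}_k$ and $\psi^{(n)}_k$, setting $b:=\prod_\cf b_n\in\C$ gives $\chi_k-\psi_k=P_k(b)$ for every $k$, i.e. $\chi(u)-\psi(u)=e^{bu}$.

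The main obstacle is the first step: justifying that the quotient $(U_\psi\otimes V)_\chi$ genuinely factors through the ultraproduct at each filtration level, since modding out by $(z-\chi(z))$ mixes filtration degrees and one must be careful that no cancellations appear in the limit that are absent in the finite-$n$ picture. This is handled by the fact that $(z-\chi(z))$ sends $F_{i-\deg z}$ into $F_i$, so the image of $F_i(U_\psi\otimes V)$ in the quotient is computed at level $i$ from data at levels $\le i$, and on every finite range of levels the ultraproduct commutes with the quotient, exactly as in the Note after the Example. Once this compatibility is cleanly stated, the rest of the argument is a direct transfer of Corollary \ref{cactiononutensv} from $\gl_n$ to $\gl_t$.
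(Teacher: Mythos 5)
Your approach is the one the paper's two-line transfer intends: reduce to $\gl_n(\ck)$ via the ultraproduct, apply the previous subsection's result to get $\chi^{(n)}(u)-\psi^{(n)}(u)=e^{b_n u}$, and set $b=\prod_\cf b_n$. You also correctly flag the delicate step, namely that $(U_\psi\otimes V)_\chi\neq 0$ must force $(U_{\psi^{(n)}}\otimes V^{(n)})_{\chi^{(n)}}\neq 0$ for $\cf$-almost all $n$.

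However, your justification of that step is not complete. You argue that since $z-\chi(z)$ sends $F_{j}$ into $F_{j+\deg z}$, the image of $F_i(U_\psi\otimes V)$ in the quotient is ``computed at level $i$ from data at levels $\le i$.'' But the intersection of the ideal $\sum_z(\Delta(z)-\chi(z))(U_\psi\otimes V)$ with $F_i$ can contain elements written as sums $\sum_k(\Delta(C_k)-\chi_k)(y_k)$ in which the individual terms and the $y_k$ live in arbitrarily high filtration degree, with cancellations bringing the total into $F_i$. There is no a priori bound on how many generators $C_k$, nor on the degree of the $y_k$, are needed, and this bound may grow with $n$. What one can show easily is the inclusion $I\cap F_iA\subset\prod_\cf(I_n\cap F_iA_n)$, which gives a surjection from the image of $F_iA$ in $A/I$ onto the ultraproduct of the images over $n$; but that surjection runs the wrong way for what you need (it shows that nonvanishing over almost all $n$ implies nonvanishing over $\gl_t$, which is the direction used later in Theorem \ref{tmainth}, not the direction needed here).

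The paper's own phrasing of the transfer is just as terse, so this is not a gap unique to you. But the cleaner route, which the paper does spell out in the general $(U_\psi\otimes[r,s])_\chi$ case, sidesteps the issue: first pass the \emph{operator} identity of Corollary \ref{cactiononutensv} to $\gl_t$ by taking ultraproducts (this is an identity of maps on $U_\psi\otimes V$, verified levelwise on each $F_i U_\psi\otimes V$, and involves no quotient), and then observe that $\Omega$ acts on $(U_\psi\otimes V)_\chi$ as the scalar $b=\tfrac12(\chi_2-\psi_2-1)$, so the identity forces $\chi_k-\psi_k=P_k(b)$ for every $k$ whenever the quotient is nonzero. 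Equivalently, the ideal of $\C[\Omega]$ generated by $P_k(\Omega)-(\chi_k-\psi_k)$ must have a common zero, or else $1$ lies in it and annihilates the quotient. I would recommend this version over the filtration-by-filtration reduction.
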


\begin{lemma}
\label{l1}
For any finitely generated $(\g,\g)$-bimodule $M$ in $\Ind(\cc)$ $$(\Delta(C_k)-C_k\otimes 1)|_{M\otimes V}=P_k(\Omega|_{M\otimes V}).$$
\end{lemma}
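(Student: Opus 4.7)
The plan is to establish the stronger universal identity that the element
$X_k \coloneqq \Delta(C_k) - C_k\otimes 1 - P_k(\Omega) \in U(\g)\otimes U(\g)$
vanishes after evaluating its second tensor factor on the natural representation $\rho_V\colon U(\g)\to\En(V)$; that is, $(1\otimes \rho_V)(X_k) = 0$ in $U(\g)\otimes\En(V)$. Granting this, for any left $\g$-module $M$ in $\Ind(\cc)$ (and in particular for any finitely generated $(\g,\g)$-bimodule) the action of $X_k$ on $M\otimes V$ is the image of $(1\otimes\rho_V)(X_k)$ under the action map $U(\g)\otimes\En(V)\to \En_{\Ind(\cc)}(M\otimes V)$, which then vanishes, yielding the Lemma.

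To prove the universal identity I first work at finite rank. For each $n$, let $X_k^{(n)}\in U(\gl_n(\ck))\otimes U(\gl_n(\ck))$ denote the analogous element. The paragraph preceding Corollary \ref{cactiononutensv} already shows that $X_k^{(n)}$ acts as zero on $M_\lam\otimes V^\nn$ for every $\lam\in\ck^n$; equivalently, $(\varphi\otimes\rho_{V^\nn})(X_k^{(n)}) = 0$ in $A\otimes\ck[x_1,\ldots,x_n]\otimes\En(V^\nn)$. The family-of-representations map $\varphi\colon U(\gl_n(\ck))\to A\otimes\ck[x_1,\ldots,x_n]$ is injective: if $\varphi(u)=0$ then $u$ annihilates every Verma module $M_\lam$, hence every finite-dimensional irreducible quotient, forcing $u=0$ by faithfulness of $U(\gl_n(\ck))$ on the direct sum $\bigoplus_\mu L_\mu$ of all finite-dimensional simples. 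Since $\En(V^\nn)$ is a finite-dimensional $\ck$-vector space, $\varphi\otimes\mathrm{id}$ remains injective, so we conclude $(1\otimes\rho_{V^\nn})(X_k^{(n)}) = 0$ in $U(\gl_n(\ck))\otimes\En(V^\nn)$.

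Finally, the passage to $\g$ is formal via Theorem \ref{t1}. The coproduct $\Delta$, the central elements $C_k$, the element $\Omega$, and the natural representation $\rho_V$ are all defined componentwise on ultraproducts, so $X_k=\prod_\cf X_k^{(n)}$ and $(1\otimes\rho_V)(X_k)=\prod_\cf(1\otimes\rho_{V^\nn})(X_k^{(n)})$ inside $F_iU(\g)\otimes\En(V)=\prod_\cf\bigl(F_iU(\gl_n(\ck))\otimes\En(V^\nn)\bigr)$ for $i$ large relative to $k$. Each ultraproduct factor was shown to vanish in the previous paragraph, so the whole element is zero, completing the proof.

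The main obstacle is the finite-rank upgrade from Corollary \ref{cactiononutensv} (the zero action on specific bimodules $U_\psi\otimes V^\nn$ or $M_\lam\otimes V^\nn$) to the universal identity $(1\otimes\rho_{V^\nn})(X_k^{(n)})=0$ as an element of $U(\gl_n(\ck))\otimes\En(V^\nn)$. The key input is the injectivity of the family map $\varphi$; once this universal vanishing is in place, the Deligne-categorical statement follows mechanically from the ultraproduct construction.
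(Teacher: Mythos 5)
Your proof is correct, and it takes a genuinely different route from the paper's. The paper proves the lemma by a three-step reduction: it first establishes the identity for $M=U_\psi$, then for $M=U_\psi\otimes U(\g)^{op}\otimes X$ by re-embedding into $\bigoplus_l\Hom_{\ck}(M_\lam,M_{\lam+e_l})\otimes U(\gl_n(\ck))\otimes X_n$ and re-running the Verma-module computation, then for $M=U(\g)\otimes U(\g)^{op}\otimes X$ via the embedding $U(\g)\hookrightarrow\bigoplus_\chi U_\chi$, and finally passes to quotients. You instead prove a single universal identity $(1\otimes\rho_V)(X_k)=0$ in $U(\g)\otimes\En(V)$ and deduce the lemma in one stroke for every left $U(\g)$-module $M$ (not just finitely generated bimodules, and this stronger form is what the Remark \ref{rem} analogue for $V^*$ and the telescoping argument in the subsequent theorem actually use). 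The key extra ingredient you introduce, not explicitly present in the paper, is the injectivity of the family map $\varphi\colon U(\gl_n(\ck))\to A\otimes\ck[x_1,\ldots,x_n]$. This is a correct and standard fact (an element killing every Verma $M_\lam$ kills every finite-dimensional simple, hence is zero; one can also deduce it from Kostant's freeness of $U$ over $Z$ and $\bigcap_\lam\ker\chi_\lam=0$), and it is what allows you to upgrade the paper's conclusion $(\varphi\otimes\rho_{V^\nn})(X_k^{(n)})=0$ to $(1\otimes\rho_{V^\nn})(X_k^{(n)})=0$ before tensoring. In exchange for invoking this injectivity, you avoid re-doing the embedding computation for $U_\psi\otimes U(\g)\otimes X$ and the decomposition of $U(\g)$ into central-character summands, so the argument is shorter and more conceptual; the paper's route is somewhat more elementary in that it only re-uses the Verma embedding already set up. One small point worth making explicit in a writeup: the identity in the lemma, as actually used in the telescoping sum for $(U_\psi\otimes[r,s])_\chi$, interprets $C_k\otimes 1$ as the \emph{left} action of $C_k$ on $M$ (for $j\ge 2$ the subtracted term is $\Delta^{j-1}(C_k)\otimes 1$, and for $j=1$, i.e.\ $M=U_\psi$, left and right multiplication by the central $C_k$ coincide), so your left-module formulation of the statement is indeed the one that is needed, and your reduction to a single action map $U(\g)\otimes\En(V)\to\En_{\Ind(\cc)}(M\otimes V)$ is legitimate.
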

\begin{proof}

We have proved this statement for $M=U_\psi$. It is easy to see that the same proof works for the module $M=U_\psi\otimes U(\g)\otimes X$, where $X\in\cc$ and $U(\g_l)$ acts only on the $U_\psi$ part of the tensor product. $X$ is represented by some sequence of $GL_n$-modules $(X_1, X_2,\ldots)$, so we can pass to a finite dimensional case. There $U_\psi\otimes U(\gl_n(\mathbb k))\otimes X_n\otimes V\hookrightarrow \bigoplus_{l=1}^n \Hom_{\mathbb k}(M_\lam, M_{\lam+e_l})\otimes U(\gl_n(\mathbb k))\otimes X_n$ and $\gl_n(\mathbb{k})_l$ acts only on the $\Hom$-part of the tensor product, moreover, only on the target, i.e. on $M_{\lam+e_{l}}$, and we can repeat the proof from above. Thus, we have obtained the following: 

$$(\Delta(C_k)-C_k\otimes 1)|_{(U_{\psi}\otimes U(\g)^{op}\otimes X) \otimes V}=P_k(\Omega|_{(U_{\psi}\otimes U(\g)^{op}\otimes X)\otimes V}).$$

For any $n\in\Z_{>0}$ the universal enveloping algebra $U(\gl_n(\mathbb{k}))$ embeds into $\bigoplus_{\chi^{(n)}} U_{\chi^{(n)}}$, where the sum runs over all central characters $\chi^{(n)}: Z(U(\gl_n(\mathbb{k})))\to \mathbb k$. Therefore, the same holds for $U(\g)$. Hence, the statement of the lemma holds for $M=U(\g)\otimes U(\g)^{op} \otimes X$, where $X\in\cc$. 

Finally, by definition, any finitely generated $U(\g)$-module is a quotient of \\ $U(\g)\otimes U(\g)\otimes X$ with $X\in\cc$ and $U(\g_l)$ acting only on the leftmost part of this tensor product. This ends the proof.
\end{proof}

\begin{remark}
\label{rem}
A similar reasoning shows that for any finitely generated $U(\g)$-module $M$ in $\Ind(\cc)$
$$
(\Delta(C_k)-C_k\otimes 1)|_{M\otimes V^{*}}=\overline P_k(\Omega|_{M\otimes V^*}),
$$
where $\overline P_k(c)=(c-1)^k-c^k$.
\end{remark}

\begin{note}
It is easy to see that $\frac{1}{(e^u-1)}\sum \overline P_k(c)u^k =  -e^{(c-1)u}$.
\end{note}

\subsection{The general case}
Now we are ready to consider the action of the central elements on $U_\psi\otimes[r,s]$. Note that the central element $C$ of $Z(U(\g_{r}))$ acts on this module as $(C\otimes 1)_{U_{\psi}, [r,s]}$, and if it is considered as an element of $Z(U(\g_{l}))$ then it acts as $\Delta(C)_{U_{\psi}, [r,s]}$ meaning that the first tensor factor acts on $U_\psi$ and the second acts on $[r,s]$.

Let us adopt some convenient notation. Let $A$ be any element in $Z(U(\g)\otimes U(\g))$. Denote by $A_j$ its image under the following map $\tau_j: U(\g)\otimes U(\g)\to U(\g)^{\otimes (m+1)}$: 
$$
\tau_j=(\Delta^{j-1}\otimes \text{id})\otimes \underbrace{1\otimes\ldots\otimes 1}_{m-j}. 
$$
We also denote by $A_j$ the corresponding operator acting on $X_0\otimes X_1\otimes\ldots\otimes X_m$, where each $X_i$ is a $U(\g)$-module.

Let us view $U_\psi\otimes[r,s]$ as the string of tensor factors 
$$
U_\psi\otimes V\otimes\ldots\otimes V\otimes V^*\otimes\ldots\otimes V^*.
$$
Then 

$$
    (\Delta(C_k)-C_k\otimes 1)_{U_{\psi}, [r,s]}=\Delta^{r+s}(C_k)-(C_k\otimes 1)_1 =
$$

$$
    =(\Delta^{r+s-1}\otimes \text{id})(\Delta(C_k)-C_k\otimes 1)+\Delta^{r+s-1}(C_k)\otimes 1 - (C_k\otimes 1)_1 =
$$

$$
    =(\Delta(C_k)-C_k\otimes 1)_{r+s}+ \Delta^{r+s-1}(C_k)\otimes 1 - (C_k\otimes 1)_1=
$$

$$
    =\ldots = 
$$

$$
    = (\Delta(C_k)-C_k\otimes 1)_{r+s}+(\Delta(C_k)-C_k\otimes 1)_{r+s-1}+\ldots + (\Delta(C_k)-C_k\otimes 1)_{1}.
$$

By Lemma \ref{l1} and Remark \ref{rem}, this is equal to
$$
\overline P_k(\Omega_{r+s})+\ldots+\overline P_k(\Omega_{r+1})+ P_k(\Omega_r)+\ldots P_k(\Omega_1).
$$

\begin{theorem}
$(U_\psi\otimes[r,s])_\chi\neq 0$ only if there exist numbers $b_1,\ldots,b_r,\\ c_1,\ldots,c_s\in\C$ such that
$$
\chi(u)-\psi(u) =
\sum_{i=1}^r e^{b_i u} - \sum_{i=1}^s e^{(c_i-1)u}.
$$
\end{theorem}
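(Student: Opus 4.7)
The plan is to promote the operator identity derived immediately above the theorem into the desired scalar identity. On $(U_\psi\otimes[r,s])_\chi$, the operator $\Delta(C_k)_{U_\psi,[r,s]}$ reduces to the scalar $\chi_k$ and $(C_k\otimes 1)_{U_\psi,[r,s]}$ reduces to $\psi_k$, so the displayed equation just above the statement becomes
$$
\chi_k-\psi_k \;=\; \sum_{i=1}^r P_k(\Omega_i) + \sum_{i=r+1}^{r+s}\overline P_k(\Omega_i).
$$
Multiplying by $\tfrac{u^k}{k!(e^u-1)}$, summing over $k$, and invoking the generating function identities from the Remark and the Note turns this into
$$
\chi(u)-\psi(u) \;=\; \sum_{i=1}^r e^{u\Omega_i} - \sum_{i=r+1}^{r+s} e^{u(\Omega_i-1)},
$$
with scalar coefficients on the left and coefficients in the commutative subalgebra generated by $\Omega_1,\dots,\Omega_{r+s}$ on the right. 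The task then reduces to showing that this commuting family has numerical joint eigenvalues on some nonzero subobject.

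To extract such eigenvalues I would use the ultraproduct model of Theorem~\ref{t1}. Writing $\chi=\prod_\cf\chi^{(n)}$, $\psi=\prod_\cf\psi^{(n)}$, one has, level by level in the PBW filtration,
$$
(U_\psi\otimes[r,s])_\chi \;=\; \prod_\cf M^{(n)},\qquad M^{(n)}:=\bigl(U_{\psi^{(n)}}\otimes (V^{(n)})^{\otimes r}\otimes ((V^{(n)})^*)^{\otimes s}\bigr)_{\chi^{(n)}}.
$$
Nonvanishing of the left-hand side forces $M^{(n)}\neq 0$ on an $\cf$-cofinal set of $n$, so it is enough to exhibit, for each such $n$, scalars $b_i^{(n)},c_j^{(n)}\in\ck$ with
$$
\chi^{(n)}(u)-\psi^{(n)}(u) \;=\; \sum_{i=1}^r e^{b_i^{(n)}u} - \sum_{j=1}^s e^{(c_j^{(n)}-1)u};
$$
then $b_i:=\prod_\cf b_i^{(n)}$ and $c_j:=\prod_\cf c_j^{(n)}$ yield the formula of the theorem.

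For fixed $n$, Duflo's theorem embeds $U_{\psi^{(n)}}\hookrightarrow \En_\ck(M_\lambda)$ for some $\lambda$ with $\sum_i\lambda_i^k=\psi^{(n)}_k$. Nonvanishing of $M^{(n)}$ then forces the $\chi^{(n)}$-generalized eigenspace of $Z(U(\gl_n(\ck)))$ in $M_\lambda\otimes (V^{(n)})^{\otimes r}\otimes ((V^{(n)})^*)^{\otimes s}$ to be nonzero. This module carries the usual Verma filtration whose subquotients are $M_{\lambda+\nu}$ for $\nu=e_{k_1}+\cdots+e_{k_r}-e_{l_1}-\cdots-e_{l_s}$ with $k_a,l_a\in\{1,\dots,n\}$; nonvanishing of the $\chi^{(n)}$-component singles out some such $\nu$ satisfying $\sum_i(\lambda+\nu)_i^k=\chi^{(n)}_k$ for all $k$. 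Iterating Corollary~\ref{cactiononutensv} and Remark~\ref{rem} along this filtration identifies the scalars by which $\Omega_1,\dots,\Omega_{r+s}$ act on $M_{\lambda+\nu}$ as
$$
b_a^{(n)}=\lambda_{k_a}+\#\{b<a:k_b=k_a\},\qquad c_a^{(n)}=\lambda_{l_a}-\#\{b<a:l_b=l_a\},
$$
and these supply the required identity at level $n$.

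The main obstacle is precisely the passage from operator to scalar identity: the commuting family $\Omega_i$ has no a~priori numerical joint eigenvalues inside $\Ind(\cc)$, which is why the ultraproduct detour is indispensable --- at finite $n$ the Verma filtration furnishes such eigenvalues by construction. A secondary technical point worth a short check is that the ultraproduct description commutes with the quotient $(-)_\chi$, so that an $\cf$-cofinal set of nonzero $M^{(n)}$ indeed yields a nonzero bimodule in $\Ind(\cc)$; this follows from the compatibility of the PBW filtration with ultraproducts noted earlier in the paper.
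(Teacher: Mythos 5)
Your reduction of the problem to the operator identity
$\chi_k-\psi_k=\sum_{i=1}^r P_k(\Omega_i)+\sum_{i=r+1}^{r+s}\overline P_k(\Omega_i)$
acting on $(U_\psi\otimes[r,s])_\chi$ agrees with the paper; the divergence is in how the scalars $b_i,c_j$ are extracted. The paper does this in one stroke with the Nullstellensatz: the image of $\C[\Omega_1,\ldots,\Omega_{r+s}]$ in $\En\bigl((U_\psi\otimes[r,s])_\chi\bigr)$ is a nonzero finitely generated commutative $\C$-algebra, hence has a maximal ideal with residue field $\C$, and evaluating the $\Omega_i$ at that point produces the numbers. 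No joint eigenvector in $\Ind(\cc)$ is required---only a $\C$-point of a commutative quotient algebra. Your closing claim that ``the ultraproduct detour is indispensable'' because ``the commuting family $\Omega_i$ has no a priori numerical joint eigenvalues'' is therefore mistaken: the Nullstellensatz is designed exactly to supply such a joint character without producing an eigenvector, and it makes the ultraproduct unnecessary for this direction.

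Your ultraproduct route can nevertheless be made to work, but at a real cost. The step that needs genuine care is the assertion
$(U_\psi\otimes[r,s])_\chi=\prod_\cf M^{(n)}$ ``level by level in the PBW filtration.''
The functor $(-)_\chi$ is a quotient by the submodule generated by the images of $\Delta(C_k)-\chi_k$, operators which raise the PBW degree; the induced filtration on the quotient is therefore not obviously the ultraproduct of the filtrations on the $M^{(n)}$, since elements of the kernel coming from high filtration degree can land in low degree. The paper faces an analogous issue in the converse direction (Theorem~\ref{tmainth}) and handles it by working with the image of $F_0U_\psi\otimes X$ rather than asserting a blanket equality, so you cannot dismiss this as ``following from compatibility of the PBW filtration with ultraproducts'' without an explicit argument. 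A smaller point: once you know $M^{(n)}\neq0$ for $\cf$-many $n$, it is cleaner to invoke the same Nullstellensatz reasoning inside the finite $\gl_n(\ck)$ world (where it is elementary) than to chase the iterated Verma filtration and write down the explicit eigenvalue formulas, which anyway require a genericity hypothesis on $\lambda$ for the Verma decomposition of $M_\lambda\otimes[r,s]$ to be as simple as you describe.
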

\begin{proof}

Let us consider $\En_{\mathbb k}((U_\psi\otimes[r,s])_\chi)$. It is a nonzero algebra and there is a homomorphism from $\C[\Omega_1,...,\Omega_{r+s}]$ to it. Its image is a nonzero finitely generated commutative algebra where
$$
\overline P_k(\Omega_{r+s})+\ldots+\overline P_k(\Omega_{r+1})+ P_k(\Omega_r)+\ldots P_k(\Omega_1)= \chi_k-\psi_k.
$$
Thus, by the Nullstellensatz, there exists a maximal ideal in this algebra, i.e. numbers $b_1,\ldots,b_r,  c_1,\ldots, c_s$, such that 
$$
\overline P_k(c_s)+\ldots+\overline P_k(c_{1})+ P_k(b_r)+\ldots P_k(b_1)= \chi_k-\psi_k.
$$
This ends the proof.
\end{proof}

\begin{corollary}
\label{c7}
 If $\hc_{\chi,\psi}\neq 0$ then there exist numbers $b_1,\ldots,b_r,\\ c_1,\ldots,c_s\in\C$ such that
$$
\chi(u)-\psi(u) = \sum_{i=1}^r e^{b_i u} - \sum_{i=1}^s e^{(c_i-1)u} .
$$
\end{corollary}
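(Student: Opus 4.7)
The proof is essentially a one-line deduction from the immediately preceding theorem together with the earlier reduction step. The plan is as follows.

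First I would recall that by the unnumbered corollary in the subsection \emph{Bimodules $N(\chi,\psi,X)$}, the category $\hc_{\chi,\psi}$ is nonzero if and only if $N(\chi,\psi,[r,s]) \neq 0$ for some pair $(r,s)$. By the construction of $N(\chi,\psi,X)$ as $(U_\psi\otimes X)_\chi$, this means there must exist nonnegative integers $r,s$ such that $(U_\psi\otimes [r,s])_\chi \neq 0$.

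Now I would apply the preceding theorem to this pair $(r,s)$: its conclusion asserts precisely that $(U_\psi\otimes[r,s])_\chi\neq 0$ forces the existence of complex numbers $b_1,\ldots,b_r,c_1,\ldots,c_s$ satisfying
$$
\chi(u)-\psi(u) = \sum_{i=1}^{r} e^{b_i u} - \sum_{i=1}^{s} e^{(c_i-1)u}.
$$
This is exactly the statement of the corollary, so there is no additional work to do. There is no real obstacle here, since the theorem has already done all the analytic and algebraic work (tracking the action of $\Delta(C_k)-C_k\otimes 1$ on the tensor string $U_\psi\otimes V^{\otimes r}\otimes (V^*)^{\otimes s}$, invoking Lemma \ref{l1}, Remark \ref{rem}, and the Nullstellensatz); the corollary is simply the translation of that conclusion through the reduction from general Harish-Chandra bimodules to bimodules of the form $N(\chi,\psi,[r,s])$.
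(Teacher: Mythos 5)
Your proof is correct and matches the paper's intended deduction exactly: the paper states this as an immediate corollary of the preceding theorem combined with the earlier reduction $\hc_{\chi,\psi}\neq 0 \Leftrightarrow (U_\psi\otimes[r,s])_\chi\neq 0$ for some $[r,s]$, giving no further argument. Nothing more is needed.
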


\subsection{The reverse direction: constructing a nonzero object in $\hc_{\chi,\psi}$}
Now we want to prove the converse, i.e. if such numbers exist, then the category is nonzero. The remainder of this paper will be devoted to proving the following theorem.

\begin{theorem}
\label{t8}
Suppose there exist numbers $b_1,\ldots,b_r,c_1,\ldots,c_s\in\C$ such that
$$
\chi(u)-\psi(u) =
\sum_{i=1}^r e^{b_i u} - \sum_{i=1}^s e^{(c_i-1)u}.
$$

Then $\hc_{\chi,\psi}\neq 0$. 
\end{theorem}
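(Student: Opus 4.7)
By the reductions leading to Corollary \ref{c7}, it suffices to exhibit, for the specific $r,s$ in the hypothesis, a non-zero bimodule $N(\chi,\psi,[r,s])=(U_\psi\otimes V^{\otimes r}\otimes(V^*)^{\otimes s})_\chi$. The plan is to realise such a bimodule as an ultraproduct of non-vanishing Harish--Chandra bimodules at the $\gl_n(\ck)$ level and to appeal to Theorem \ref{t1}.

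For each $n>r+s$, I would construct a weight $\mu^{(n)}\in\ck^n$ whose coordinates are prescribed on two disjoint blocks: $\mu^{(n)}_j=b_j^{(n)}$ for $1\le j\le r$ and $\mu^{(n)}_{n-j+1}=c_j^{(n)}$ for $1\le j\le s$, where $\prod_\cf b_j^{(n)}=b_j$ and $\prod_\cf c_j^{(n)}=c_j$. The remaining $n-r-s$ coordinates are chosen freely so that (i) the resulting central character $\psi^{(n)}$ corresponding to $\mu^{(n)}$ has $\prod_\cf\psi^{(n)}=\psi$, and (ii) $\mu^{(n)}$ is sufficiently generic (all coordinates pairwise distinct and avoiding the finitely many bad hyperplanes needed below). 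Both conditions are achievable thanks to the algebraic independence of $C_1,\ldots,C_n$ in $Z(U(\gl_n(\ck)))$ and the freedom to assign $\psi^{(n)}_k$ arbitrarily for $k<n$. Setting $\lambda^{(n)}:=\mu^{(n)}+\sum_{j=1}^r e_j-\sum_{j=1}^s e_{n-j+1}$, the identity $\chi^{(n)}_k-\psi^{(n)}_k=\sum_j P_k(b_j^{(n)})+\sum_j \overline P_k(c_j^{(n)})$ combined with the generating-function formulas in the previous section gives $\prod_\cf\chi^{(n)}=\chi$.

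The key step is to show that each finite-level bimodule $(U_{\psi^{(n)}}\otimes V^{\otimes r}\otimes(V^*)^{\otimes s})_{\chi^{(n)}}$ is non-zero. Consider the antisymmetric weight vector
$$w=(v_1\wedge\cdots\wedge v_r)\otimes(v_n^*\wedge\cdots\wedge v_{n-s+1}^*)\in V^{\otimes r}\otimes(V^*)^{\otimes s}.$$
A direct check shows that $v_{\mu^{(n)}}\otimes w$ is a highest-weight vector of weight $\lambda^{(n)}$ in $M_{\mu^{(n)}}\otimes V^{\otimes r}\otimes(V^*)^{\otimes s}$: the action of any $e_{ij}$ with $i<j$ on either wedge either inserts a factor already present (killed by antisymmetry) or lands on zero. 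Hence $v_{\mu^{(n)}}\otimes w$ generates a submodule isomorphic to $M_{\lambda^{(n)}}$, which for generic $\mu^{(n)}$ is a direct summand. By Duflo's theorem, the embedding $U_{\psi^{(n)}}\hookrightarrow\En_\ck(M_{\mu^{(n)}})$ induces an injection $U_{\psi^{(n)}}\otimes V^{\otimes r}\otimes(V^*)^{\otimes s}\hookrightarrow\Hom_\ck(M_{\mu^{(n)}},M_{\mu^{(n)}}\otimes V^{\otimes r}\otimes(V^*)^{\otimes s})$, and composition with the projection to the $M_{\lambda^{(n)}}$-summand sends $1\otimes w$ to the $\ck$-linear map $m\mapsto\mathrm{pr}(m\otimes w)$, which carries $v_{\mu^{(n)}}$ to the highest-weight vector of $M_{\lambda^{(n)}}$ and is therefore non-zero. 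Since the target has left central character $\chi^{(n)}$, this map factors through the bimodule quotient $(U_{\psi^{(n)}}\otimes V^{\otimes r}\otimes(V^*)^{\otimes s})_{\chi^{(n)}}$, giving the desired non-vanishing.

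Finally, the filtered pieces $F_i(U_\psi\otimes[r,s])_\chi$ are ultraproducts of their $\gl_n$-analogues, so non-vanishing for $\cf$-many $n$ implies non-vanishing of the ind-object in $\Ind(\cc)$, producing a non-zero Harish--Chandra bimodule in $\hc_{\chi,\psi}$. The main obstacle will be the finite-level construction in the third paragraph: producing an explicit $\n_+$-invariant vector realising arbitrary prescribed weight shifts (the antisymmetrisation is essential, as a naive pure tensor fails to be highest weight) and using Duflo's embedding to lift non-vanishing from the module level to the bimodule level while still checking genericity of $\mu^{(n)}$ compatible with the constraints from the ultraproduct.
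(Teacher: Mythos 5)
Your overall strategy matches the paper's: build the finite-rank weights $\mu^{(n)}$ and $\lambda^{(n)}$ so that the associated central characters have the right ultraproduct, show nonvanishing of a bimodule at each finite level, and pass to the ultraproduct. The antisymmetrised vector $w$ you use is exactly the highest weight vector of $\Lambda^r V\otimes\Lambda^s V^*\subset[r,s]$, which is the summand the paper in effect works with (the paper writes $S^rV\otimes S^sV^*$, but the stated maximal weight $e_1+\cdots+e_r-e_{n-s+1}-\cdots-e_n$ is that of the exterior, not symmetric, powers).

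Where the two arguments genuinely diverge is at the finite level, and this is where your proposal has a gap. You need $U(\g)(v_{\mu^{(n)}}\otimes w)\cong M_{\lambda^{(n)}}$ to be a \emph{direct summand} of $M_{\mu^{(n)}}\otimes[r,s]$ so that you can project, and you buy this by requiring $\mu^{(n)}$ to be "sufficiently generic." But $\mu^{(n)}$ is almost entirely forced on you: its first $r$ and last $s$ coordinates are fixed (up to changing finitely many $n$) by the given $b_i,c_j$, and the remaining $n-r-s$ coordinates $a_i^{(n)}$ are determined (up to permutation) by the power-sum equations $\sum_i(a_i^{(n)})^k=\phi_k^{(n)}-\sum_j(b_j^{(n)})^k-\sum_j(c_j^{(n)})^k$ for $k\le n-r-s$, an exactly determined system. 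There is no room to perturb $a_i^{(n)}$ to avoid the singular locus, and the "freedom to assign $\psi^{(n)}_k$ for $k<n$" does not give you this either: the values $\psi^{(n)}_k$ for large $k$ relative to $n$ are irrelevant to the ultraproduct, but they also do not enter the determining equations. Concretely, if $\psi$ has generating function $\frac{1}{e^u-1}$ (all $\psi_k=0$) and $b_1=0,\ s=0$, the construction forces $\mu^{(n)}=(0,\ldots,0)$ for $\cf$-many $n$, which is as non-generic as possible, yet the theorem's hypothesis is satisfied with $\chi(u)-\psi(u)=e^{0\cdot u}$.

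The paper's Lemma \ref{l11} sidesteps this entirely: since $\lambda-\mu$ is the \emph{maximal} weight of $X=\Lambda^rV\otimes\Lambda^sV^*$, the weight $\lambda$ is maximal in $M_\mu\otimes X$, so $M_\mu\otimes X$ always has a simple quotient $L_\lambda$ (no genericity needed). That quotient has left central character $\chi$, hence survives the functor $-\otimes_{U(\g)}U_\chi$, and the image of $1\otimes x_{\lambda-\mu}$ is nonzero there. Your argument can be repaired with a one-line change: take $X=\Lambda^rV\otimes\Lambda^sV^*$ (a direct summand of $[r,s]$, so still in $\cc$) rather than $[r,s]$ itself, note that your $w$ is the highest weight vector of $X$ and that $\lambda-\mu$ is now maximal, and then invoke the maximal-weight/simple-quotient argument in place of Duflo plus a projection to a direct summand. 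Everything else in your proposal (the construction of $\mu^{(n)},\lambda^{(n)}$, the generating-function bookkeeping, and the ultraproduct of the filtered pieces) is correct and matches the paper.
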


To prove this we are going to show that $(U_{\psi}\otimes X)_{\chi}$ is nonzero for some $X\in \cc$.

\begin{lemma}
\label{l8}
Let $b_i=\prod_{\cf}~b_i^{(n)}$, for $1\le i\le r$ and $c_i=\prod_{\cf}~c_i^{(n)}$, for $1\le i \le s$, with $b_i^{(n)},c_i^{(n)}\in \mathbb k$.

Then for any $\psi(u)$ there exists a presentation of it as an ultraproduct of $\psi^{(n)}(u)$ - central characters of $\gl_n(\mathbb{k})$ - so that $\psi^{(n)}(u) = 0$ for $n\le r+s$ and when $n>r+s$ then $\gl_n(\mathbb{k})$ acts with central character $\psi^{(n)}$ on some $M_{\mu^{(n)}}$, where $$\mu^{(n)}_i= b^{(n)}_i, ~~~1\le i\le r
$$
$$
\mu^{(n)}_{n-j+1}=c^{(n)}_j, ~~~1\le j\le s
$$ 

\end{lemma}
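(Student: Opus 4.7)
The plan is to construct the characters $\psi^\nn$ explicitly as those of Verma modules $M_{\mu^\nn}$, where $\mu^\nn\in\ck^n$ has the prescribed entries $b_i^\nn$ in the first $r$ slots and $c_j^\nn$ in the last $s$ slots, while the remaining $m\coloneqq n-r-s$ middle coordinates are chosen so that the power sums $\psi_k^\nn=\sum_i(\mu_i^\nn)^k$ recombine, in the ultraproduct, to the prescribed numbers $\psi_k$. Since only finitely many terms are ignored, the small-$n$ choice is irrelevant and can be taken trivial.

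Concretely, for $k\ge 1$ I would set
\[
\alpha_k\coloneqq \psi_k-\sum_{i=1}^r b_i^k-\sum_{j=1}^s c_j^k\in \C,
\]
and fix any presentation $\alpha_k=\prod_\cf\alpha_k^\nn$ with $\alpha_k^\nn\in\ck$, via the chosen isomorphism $\prod_\cf\ck\simeq\C$. For $n>r+s$ and $m=n-r-s$ the task reduces to finding $y_1^\nn,\ldots,y_m^\nn\in\ck$ such that
\[
\sum_{l=1}^m (y_l^\nn)^k=\alpha_k^\nn\qquad\text{for every }k=1,\ldots,m.
\]
This is furnished by Newton's identities: the $\alpha_k^\nn$ uniquely determine elementary symmetric functions $e_1^\nn,\ldots,e_m^\nn\in\ck$, and the $y_l^\nn$ are the roots of $T^m-e_1^\nn T^{m-1}+\cdots+(-1)^m e_m^\nn\in\ck[T]$, which live in $\ck$ because $\ck=\overline\Q$ is algebraically closed. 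With these in hand I define $\mu^\nn\in\ck^n$ by $\mu^\nn_i=b_i^\nn$ for $1\le i\le r$, $\mu^\nn_{r+l}=y_l^\nn$ for $1\le l\le m$, and $\mu^\nn_{n-j+1}=c_j^\nn$ for $1\le j\le s$, and let $\psi^\nn$ be the central character of $M_{\mu^\nn}$, so that $\psi^\nn_k=\sum_i(\mu^\nn_i)^k$. For $n\le r+s$ I take the trivial character ($\mu^\nn=0$, hence $\psi^\nn_k=0$ for all $k\ge 1$).

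It remains to verify $\psi=\prod_\cf\psi^\nn$. Fix $k\ge 1$; for every $n\ge r+s+k$ one has $m\ge k$ and therefore
\[
\psi^\nn_k=\sum_{i=1}^r (b_i^\nn)^k+\sum_{j=1}^s (c_j^\nn)^k+\alpha_k^\nn.
\]
Since the set of such $n$ is cofinite and hence lies in $\cf$, applying $\prod_\cf$ collapses the right-hand side to $\sum_i b_i^k+\sum_j c_j^k+\alpha_k$, which equals $\psi_k$ by the very definition of $\alpha_k$. The only non-routine step is the appeal to Newton's identities; I expect that to be the main point to watch, but it is quite mild once one observes that the entire construction can be carried out inside the algebraically closed field $\ck=\overline\Q$, so no passage to a larger field is ever needed.
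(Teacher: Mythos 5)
Your proposal is correct and follows essentially the same strategy as the paper: set the first $r$ and last $s$ entries of $\mu^{(n)}$ to the prescribed $b_i^{(n)}, c_j^{(n)}$, solve for the $m = n-r-s$ middle entries so their first $m$ power sums match the needed correction term, and observe that changing only the finitely many small-$n$ terms does not affect the ultraproduct. The paper phrases the solvability step as surjectivity of the map $\mathrm{Specm}\,\ck[x_1,\ldots,x_m] \to \mathrm{Specm}\,\ck[x_1,\ldots,x_m]^{S_m}$, while you invoke Newton's identities to pass from power sums to elementary symmetric functions and then extract roots in $\ck=\overline{\Q}$; these are the same argument in different clothes.
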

\begin{proof}

Let us take an arbitrary presentation of the numbers $\psi_k$ as an ultraproduct: $\psi_k = \prod_\cf~\phi_k^{(n)}$.

We will prove that for a fixed $k$ we can change finitely many of the numbers $\phi_k^{(n)}$, namely those with $n < k+s+r$ so that the resulting central characters satisfy the condition above. 

For a fixed $n$ consider the following equations on $m\coloneqq n-r-s$ variables $x_i$ for $k=1,\ldots , m$:
\begin{equation}
  \label{f1}
(b_1^{(n)})^k+\ldots+(b_r^{(n)})^k+(c_1^{(n)})^k+\ldots+(c_s^{(n)})^k+ x_1^k+\ldots+x_{m}^k = \phi^{(n)}_k.  
\end{equation}

They are equations on the first $m$ power sums of $x_i$.  The ring of symmetric polynomials in $m$ variables is freely generated by the first $m$ power sums, so these equations determine a point in the maximal spectrum of $\ck[x_1,\ldots, x_m]^{S_n}$. The inclusion $\ck[x_1,\ldots,x_m]^{S_n}\hookrightarrow \ck[x_1,\ldots,x_m]$ induces a surjective map on the maximal spectra (the quotient map by the $S_n$-action), thus, there exists a solution $x_i = a_i^{(n)}$ for $1\le i\le m$.

Put $$\mu_1^{(n)}=b_1^{(n)}, \ldots,  \mu_r^{(n)}=b_r^{(n)},
$$
$$
\mu_{r+1}^\nn=a_1^{(n)},\ldots,  \mu_{n-s}^{(n)} = a_{n-r-s}^{(n)},
$$
$$
\mu_{n-s+1}^{(n)}=c_s^{(n)}, \ldots,  \mu_n^{(n)}=c_1^{(n)}.
$$

Consider the central character $\psi^{(n)}$ corresponding to the $\mu^{(n)}$ above. Then for $1\le k\le n-r-s$ we have  
$$
\psi^{(n)}_k  \coloneqq \sum_{i=1}^n (\mu_i^{(n)})^k = \sum_{i=1}^r (b_i^\nn)^k + \sum_{i=1}^{m} (a_i^\nn)^k + \sum_{i=1}^s (c_i^\nn)^k= \phi^{(n)}_k\text{ (by } \ref{f1}). $$

Putting $\psi^{(n)}=0$ for $n\le r+s$, we see that for a fixed $k$ we have $\psi^{(n)}_k = \phi^{(n)}_k$ for all $n\ge k+r+s$  and hence, $\prod_\cf \psi_k^{(n)} = \prod_\cf \phi_k^{(n)}= \psi_k.$
\end{proof}

\begin{lemma}
\label{l9}
Let $n>r+s$ and $$\lam = \mu+e_1+\ldots+e_r-e_{n-s+1}-\ldots-e_n.$$ Suppose $Z(U(\gl_n(\mathbb{k})))$ acts on $M_\lam$ with character $\chi$ and on $M_\mu$ with character $\psi$. Then $$\chi(u)-\psi(u) = e^{\mu_1 u}+\ldots+e^{\mu_r u}- e^{(\mu_{n-s+1}-1)u}-\ldots - e^{(\mu_n - 1)u}.$$
\end{lemma}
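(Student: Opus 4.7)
The plan is to reduce the identity to a straightforward computation using the explicit description of central characters for $\gl_n(\ck)$ already recorded in the preliminaries, together with the two generating‐function identities for $P_k$ and $\overline P_k$ noted just after Corollary \ref{cactiononutensv} and in Remark \ref{rem}.

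First I would recall that for $\gl_n(\ck)$ the central element $C_k$ acts on the Verma module $M_\nu$ (with shifted highest weight $\nu$) by the scalar $\sum_{i=1}^n \nu_i^k$. Hence $\chi_k=\sum_{i=1}^n \lam_i^k$ and $\psi_k=\sum_{i=1}^n \mu_i^k$. Since $\lam$ differs from $\mu$ only in coordinates $1,\ldots,r$ (shifted up by $1$) and $n-s+1,\ldots,n$ (shifted down by $1$), the middle terms cancel and
$$
\chi_k-\psi_k=\sum_{i=1}^{r}\bigl((\mu_i+1)^k-\mu_i^k\bigr)+\sum_{j=n-s+1}^{n}\bigl((\mu_j-1)^k-\mu_j^k\bigr)=\sum_{i=1}^{r}P_k(\mu_i)+\sum_{j=n-s+1}^{n}\overline P_k(\mu_j).
$$

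Second, I would multiply by $\tfrac{u^k}{k!}$, sum over $k\ge 0$ (noting $\chi_0=\psi_0=1$ so the $k=0$ contribution vanishes on the left, matching the cancellation of $r$ ones and $s$ ones on the right), and divide by $e^u-1$. Applying the identity $\frac{1}{e^u-1}\sum_k \tfrac{1}{k!}P_k(b)u^k=e^{bu}$ from the remark after Corollary \ref{cactiononutensv} together with $\frac{1}{e^u-1}\sum_k \tfrac{1}{k!}\overline P_k(c)u^k=-e^{(c-1)u}$ from the note after Remark \ref{rem}, one reads off
$$
\chi(u)-\psi(u)=\sum_{i=1}^{r}e^{\mu_i u}-\sum_{j=n-s+1}^{n}e^{(\mu_j-1)u},
$$
which is exactly the claimed formula.

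There is no real obstacle here: the lemma is essentially a packaging statement that rewrites the elementary difference of power sums $\sum\lam_i^k-\sum\mu_i^k$ via the generating-function conventions introduced in Definition \ref{d1}. The only mild care needed is bookkeeping with the constant term (the $\psi_0=\chi_0=1$ convention) and making sure the signs and the shift $c\mapsto c-1$ in $\overline P_k$ match; both are handled automatically by the two generating-function identities quoted above.
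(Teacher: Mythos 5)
Your proof is correct and follows essentially the same approach as the paper: both hinge on the fact that $C_k$ acts on $M_\nu$ by $\sum_i\nu_i^k$, reduce the claim to $\chi_k-\psi_k=\sum_{i=1}^r P_k(\mu_i)+\sum_{j=n-s+1}^n\overline P_k(\mu_j)$, and then apply the two generating-function identities. The only cosmetic difference is that the paper telescopes through the chain of intermediate weights $\lam^{[i]},\mu^{[i]}$ while you compute the difference of power sums directly, which is a bit shorter; the mathematical content is identical.
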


\begin{proof}
 The element $C_k$ acts on $M_{\mu+e_l}$ as $\sum_j \mu_j^k + P_k(\mu_l)$ and on $M_{\mu-e_l}$ as \\ $\sum_j \mu_j^k + \overline{P}_k(\mu_l)$. Thus, $$C_k|_{M_{\mu+e_l}}-C_k|_{M_\mu} = P_k(\mu_l)$$ and $$C_k|_{M_{\mu-e_l}}-C_k|_{M_\mu} = \overline P_k(\mu_l).$$

We put $\lam^{[0]} \coloneqq \lam, \lam^{[i]} \coloneqq \lam^{[i-1]}-e_i, 1\le i\le r$ and $ \mu^{[i]}\coloneqq\mu^{[i-1]}-e_{n-i+1}, 1\le i\le s$ with $\mu^{[0]}\coloneqq\mu$ (thus $\mu^{[s]} = \lam^{[r]}$):
$$
\mu^{[0]} = (\mu_1,\ldots,\mu_n),
$$
$$
\mu^{[1]} = (\mu_1,\ldots,\mu_{n-1},\mu_n-1),
$$
$$
\mu^{[2]} = (\mu_1,\ldots,\mu_{n-2},\mu_{n-1}-1,\mu_n-1),
$$
$$
\cdots
$$
$$
\mu^{[s]} = \lam^{[r]} = (\mu_1,\ldots,\mu_{n-s}, \mu_{n-s+1}-1,\ldots, \mu_{n}-1),
$$
$$
\lam^{[r-1]} = (\mu_1,\ldots,\mu_{r-1},\mu_r+1,\mu_{r+1},\ldots, \mu_{n-s},\mu_{n-s+1}-1,\ldots,\mu_n-1),
$$
$$
\cdots
$$
$$
\lam^{[1]} = (\mu_1, \mu_2+1,\ldots,\mu_r+1,\mu_{r+1},\ldots,\mu_{n-s},\mu_{n-s+1}-1,\ldots,\mu_n-1)
$$
$$
\lam^{[0]} = (\mu_1+1,\ldots,\mu_r+1,\mu_{r+1},\ldots,\mu_{n-s},\mu_{n-s+1}-1,\ldots,\mu_n-1).
$$

Now let $\chi^{[i]}$ be the central character corresponding to $\lam^{[i]}$ and $\psi^{[i]}$ be the central character corresponding to $\mu^{[i]}$. Thus, $\chi^{[i]}_k-\chi^{[i+1]}_k = P_k(\lam^{[i+1]}_{i+1}) = P_k(\mu_{i+1})$ and $\psi^{[i+1]}-\psi^{[i]}=\overline P_k(\mu^{[i]}_{n-i})=\overline P_k(\mu_{n-i})$. Summing up these equations for all $\chi$'s and $\psi$'s  we obtain $$\chi_k-\psi_k= P_k(\mu_1)+\ldots+P_k(\mu_r)+\overline P_k(\mu_{n-s+1})+\ldots+\overline P_k(\mu_n),$$ which leads to the desired relation for generating functions $\chi(u)$ and $\psi(u)$.
\end{proof}

\begin{corollary}{(of Lemmas \ref{l8} and \ref{l9})}
\label{c10}

 Suppose $$
 \chi(u)-\psi(u) = \sum_{i=1}^r e^{b_i u} - \sum_{i=1}^s e^{(c_i-1)u} 
 $$
 for some $b_i\in \C, 1\le i\le r, c_j\in\C, 1\le j\le s$. Then there exist presentations of $\chi$ and $\psi$ as ultraproducts of $\chi^{(n)}$ and $\psi^{(n)}$ correspondingly, so that for any $n>r+s$ the algebra $U(\gl_n(\mathbb{k}))$ acts on some $M_{\lam^{(n)}}$ with central character $\chi^{(n)}$ and if $\mu^{(n)} = \lam^{(n)}-e_1-\ldots-e_r+e_{n-s+1}+\ldots + e_n$, then $U(\gl_n(\mathbb{k}))$ acts on $M_{\mu^{(n)}}$ with central character $\psi^{(n)}$.
\end{corollary}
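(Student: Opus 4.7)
The plan is to obtain this corollary as a direct combination of Lemmas \ref{l8} and \ref{l9}. The structural idea is: Lemma \ref{l8} gives enough freedom to present $\psi$ as an ultraproduct of central characters associated to Verma modules $M_{\mu^{(n)}}$ whose highest weights have any prescribed values in the first $r$ and last $s$ coordinates; Lemma \ref{l9} then computes the difference $\chi^{(n)}(u)-\psi^{(n)}(u)$ between two Verma characters whose weights differ by exactly the prescribed shift, and the shape of this difference matches the hypothesis term by term.

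First I would fix presentations of the given complex numbers as ultraproducts of elements of $\ck$: write $b_i=\prod_\cf b_i^{(n)}$ for $1\le i\le r$ and $c_j=\prod_\cf c_j^{(n)}$ for $1\le j\le s$, which is possible because of the isomorphism $\prod_\cf\ck\simeq \C$. Next I would apply Lemma \ref{l8} to the central character $\psi$ using these chosen sequences. This yields a presentation $\psi=\prod_\cf \psi^{(n)}$ and, for each $n>r+s$, a weight $\mu^{(n)}\in \ck^n$ with $\mu^{(n)}_i=b_i^{(n)}$ for $1\le i\le r$, $\mu^{(n)}_{n-j+1}=c_j^{(n)}$ for $1\le j\le s$, and such that $U(\gl_n(\ck))$ acts on $M_{\mu^{(n)}}$ with central character $\psi^{(n)}$.

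Then I would define, for $n>r+s$,
\[
\lam^{(n)}\coloneqq \mu^{(n)}+e_1+\ldots+e_r-e_{n-s+1}-\ldots-e_n,
\]
and let $\chi^{(n)}$ be the central character with which $U(\gl_n(\ck))$ acts on $M_{\lam^{(n)}}$. Applying Lemma \ref{l9} to this pair $(\lam^{(n)},\mu^{(n)})$ yields
\[
\chi^{(n)}(u)-\psi^{(n)}(u)=\sum_{i=1}^r e^{\mu_i^{(n)}u}-\sum_{j=1}^s e^{(\mu_{n-s+j}^{(n)}-1)u}=\sum_{i=1}^r e^{b_i^{(n)}u}-\sum_{j=1}^s e^{(c_j^{(n)}-1)u},
\]
where the second equality just re-indexes the last sum. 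Taking the ultraproduct coefficient by coefficient and using the hypothesis on $\chi(u)-\psi(u)$, one sees that $\prod_\cf \chi^{(n)}=\chi$, so this is a valid ultraproduct presentation of $\chi$ with all the required properties.

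There is no genuine obstacle here since the two preceding lemmas do all of the nontrivial work; the only thing to verify carefully is the indexing/bookkeeping between the ordering of the $c_j$'s in Lemma \ref{l8} (where $\mu^{(n)}_{n-j+1}=c_j^{(n)}$) and Lemma \ref{l9} (whose formula refers to the last $s$ coordinates of $\mu^{(n)}$), and to handle the small cases $n\le r+s$ by setting $\chi^{(n)}=\psi^{(n)}=0$ there, which is harmless for the ultraproduct.
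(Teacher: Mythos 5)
Your argument matches the paper's proof of Corollary~\ref{c10} step for step: apply Lemma~\ref{l8} to $\psi$ with the chosen ultraproduct representatives $b_i^{(n)},c_j^{(n)}$, define $\lam^{(n)}=\mu^{(n)}+e_1+\ldots+e_r-e_{n-s+1}-\ldots-e_n$, invoke Lemma~\ref{l9} for the difference of central characters, and then pass to the ultraproduct to identify $\prod_\cf\chi^{(n)}$ with $\chi$. The re-indexing of the $c_j$'s that you flag is indeed harmless, and the handling of $n\le r+s$ by setting $\chi^{(n)}=0$ is exactly what the paper does.
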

\begin{proof}
We apply Lemma \ref{l8} to $\psi(u),b_i,c_j$ to obtain central characters $\psi^{(n)}$ and weights $\mu^{(n)}$, satisfying the conditions of the lemma. Then for $n>r+s$, we put $\lam^{(n)}= \mu^{(n)}+e_1+\ldots+e_r-e_{n-s+1}-\ldots - e_n$ and denote by $\chi^{(n)}$ the central character corresponding to $\lam^{(n)}$.

We use Lemma \ref{l9} for $\lam^{(n)}$ and $\mu^{(n)}$ to see that $$\chi^{(n)}(u)-\psi^{(n)}(u) = e^{b_1^{(n)} u}+\ldots+e^{b_r^{(n)} u}- e^{(c_1^{(n)}-1)u}-\ldots- e^{(c_s^{(n)}-1)u}.$$

We put $\chi^{(n)} = 0$ for $n\le r+s$. If now $\widetilde{\chi}(u)=\prod_\cf \chi^{(n)}(u)$ then $$\widetilde{\chi}(u)-\psi(u) = e^{b_1 u}+\ldots+e^{b_r u}-e^{(c_1-1)u}-\ldots - e^{(c_s-1)u}$$ and thus $\widetilde{\chi}(u)=\chi(u)$ and we are done.
\end{proof}

\begin{lemma}
\label{l11}
Let $\lam,\mu,\text{ be such that  } \lam-\mu \in\Lambda^+$ and suppose $X$ is a finite-dimensional $\gl_n(\mathbb{k})$-module, with maximal weight  $\lam-\mu$, let $\chi$ be the central character corresponding to $\lam$ and $\psi$ be the central character corresponding to $\mu$. Then we have
$$
(U_\psi\otimes X)_\chi \neq 0.
$$
Moreover, if $F_0 U_\psi$ is the zeroth filtered component, that is the span of $1$, the quotient $(F_0 U_\psi\otimes X)_\chi$ is nonzero.
\end{lemma}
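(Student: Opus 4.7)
The plan is to show that $N(\chi,\psi,X) := (U_\psi \otimes X)_\chi \neq 0$ by tensoring on the right over $U_\psi$ with the Verma module $M_\mu$. A short unraveling yields the $\g$-module identification
$$
N(\chi,\psi,X) \otimes_{U_\psi} M_\mu \;\cong\; (M_\mu \otimes X)\big/\mathfrak{m}_\chi(M_\mu \otimes X),
$$
where $\mathfrak{m}_\chi \subset Z(U(\g))$ is the maximal ideal associated to $\chi$ and $\g$ acts diagonally on $M_\mu \otimes X$. This comes from the $\g$-equivariant isomorphism $(U_\psi \otimes X) \otimes_{U_\psi} M_\mu \cong M_\mu \otimes X$ (obtained from the right $U_\psi$-action on $U_\psi \otimes X$ and the cancellation $U_\psi \otimes_{U_\psi} M_\mu = M_\mu$) combined with right-exactness of $- \otimes_{U_\psi} M_\mu$, under which the left $\mathfrak{m}_\chi$-quotient defining $N(\chi,\psi,X)$ corresponds to the diagonal $\mathfrak{m}_\chi$-quotient on $M_\mu \otimes X$. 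It therefore suffices to show that this right-hand side is nonzero.

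By the classical Verma flag, $M_\mu \otimes X$ admits a finite $\g$-module filtration whose successive quotients are $M_{\mu+\nu_i}$, one Verma module for each weight $\nu_i$ of $X$ (counted with multiplicity). Since $\mathfrak{m}_\chi$ acts invertibly on any generalized $Z$-eigenspace of eigenvalue different from $\chi$, the quotient by $\mathfrak{m}_\chi$ depends only on $V_\chi := (M_\mu \otimes X)_\chi^{\mathrm{gen}}$. Because each Verma $M_{\mu+\nu_i}$ has an exact central character, the Verma flag restricts to a finite filtration $0 = F_0 \subset F_1 \subset \cdots \subset F_k = V_\chi$ whose subquotients are precisely those $M_{\mu+\nu_{i'}}$ of central character $\chi$; in particular $V_\chi \neq 0$, because $\lambda - \mu$ is a weight of $X$ and so $M_\lambda$ appears. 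Since $\mathfrak{m}_\chi$ acts as zero on every Verma of central character $\chi$, it strictly lowers this filtration ($\mathfrak{m}_\chi F_j \subseteq F_{j-1}$), whence $\mathfrak{m}_\chi V_\chi \subseteq F_{k-1}$. Hence $V_\chi/\mathfrak{m}_\chi V_\chi$ surjects onto the nonzero Verma module $F_k/F_{k-1}$, and so is nonzero.

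This gives $(U_\psi \otimes X)_\chi \neq 0$. For the refinement, observe that $U_\psi \otimes X$ is generated as a right $U_\psi$-module by $1 \otimes X$ (every pure tensor is $1 \otimes x$ acted on by $u$ on the right), so its quotient $N(\chi,\psi,X)$ is likewise generated by the image of $1 \otimes X$; since $N(\chi,\psi,X) \neq 0$, that image cannot vanish, yielding $(F_0 U_\psi \otimes X)_\chi \neq 0$ (recall $F_0 U_\psi = \ck \cdot 1$). The substantive classical input is the Verma flag of $M_\mu \otimes X$ together with the formal tensor-product identification in step one; everything else is central-character bookkeeping.
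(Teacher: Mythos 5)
Your proof is correct, and it takes a genuinely different route from the paper's at two points. First, the reduction to a statement about $M_\mu \otimes X$: the paper instead surjects $U_\psi \otimes X \twoheadrightarrow M_\mu \otimes X$ via $u \otimes x \mapsto uv_\mu \otimes x$ and invokes right-exactness of $(-)_\chi$, whereas you tensor on the right by $M_\mu$ over $U_\psi$; these are essentially the same observation (both are the cancellation $U_\psi \otimes_{U_\psi} M_\mu \cong M_\mu$) packaged differently, and both reduce to showing $(M_\mu \otimes X)/\mathfrak{m}_\chi(M_\mu\otimes X) \neq 0$. Second, and more substantively, the nonvanishing of that quotient: the paper uses the maximality of $\lambda$ among weights of $M_\mu \otimes X$ to produce a simple quotient $L(\lambda)$ of exact central character $\chi$, which survives $(-)_\chi$; you instead invoke the Verma flag of $M_\mu \otimes X$, restrict to the generalized $\chi$-block $V_\chi$, and observe that $\mathfrak{m}_\chi$ strictly lowers the resulting Verma filtration. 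Your route reaches for a somewhat bigger hammer (the Verma flag theorem, plus the block decomposition of category $\mathcal O$ and its compatibility with the flag), whereas the paper's simple-top-quotient argument is more self-contained. On the other hand, your argument only uses that $\lambda - \mu$ is \emph{some} weight of $X$, not that it is the maximal one, so it proves a mildly stronger statement; it also identifies the surviving subquotient as a Verma rather than merely a simple. For the refinement your argument is cleaner than the paper's: observing that $N(\chi,\psi,X)$ is generated as a right $U_\psi$-module by the image of $F_0 U_\psi \otimes X$ gives the claim immediately, whereas the paper has to trace the explicit vector $1 \otimes x_{\lambda-\mu}$ through to the highest weight vector of $L(\lambda)$. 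One small point you glossed over: you should justify that the Verma flag filtration restricts to a filtration of $V_\chi$ with Verma subquotients; this holds because each step $F_i$ of the flag is a $\mathfrak{g}$-submodule and therefore decomposes along generalized central characters, so $F_i \cap V_\chi$ gives the desired filtration with subquotients equal to the $\chi$-part of each $F_i/F_{i-1}$.
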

\begin{proof}
Consider a natural surjective map of $U(\gl_n(\mathbb{k}))$-modules
$$
U_\psi\otimes X\to M_\mu\otimes X\to 0,
$$
which sends $u\otimes x$ to $u v_\mu\otimes x$, where $v_\mu$ is the highest weight vector in $M_\mu$.

Taking tensor product with  $U_\chi$ over $U(\gl_n(\ck))$ for some $\chi$,  $$\chi:Z(U(\gl_n(\mathbb{k})))\to\ck,$$ is right exact, thus we have

$$
(U_\psi\otimes X)_\chi\to (M_\mu\otimes X)_\chi\to 0.
$$

So it is left to prove that $(M_\mu\otimes X)_\chi\neq 0$ and that the image of $v_\mu\otimes X$ is nonzero.  

We note that $\lam$ is maximal among weights of $M_\mu\otimes X$. Using the standard argument, i.e. taking the sum of all submodules of $M_\mu\otimes X$ (that are naturally objects of the category $\mathcal O)$, that don't contain weight $\lam$, one can show that $M_\mu\otimes X$ has a simple quotient-module $L$ with highest weight $\lam$. And if $x_{\lam-\mu}$ is a vector of maximal weight in $X$ then the highest weight vector of $L$ is the image of $v_{\mu}\otimes x_{\lam-\mu}$. 

 Since $L$ is invariant under taking tensor product with $U_\chi$ over $U(\gl_n(\ck))$, there is a surjective map
$$
(M_\mu\otimes X)_\chi\to L\to 0.
$$
And thus $(M_\mu\otimes X)_\chi$ and, consequently, $(U_\psi\otimes X)_\chi$ is nonzero. Moreover, the image of $1\otimes x_{\lam-\mu}$ is nonzero in $L$. Thus, $(F_0U_\psi\otimes X)_\chi$ is nonzero.
\end{proof}

\begin{theorem}\label{tmainth}
Suppose there exist numbers $b_1,\ldots,b_r,c_1,\ldots,c_s\in\C$ such that
$$
\chi(u)-\psi(u) = \sum_{i=1}^r e^{b_i u} - \sum_{i=1}^s e^{(c_i-1)u} .
$$

Then $(U_\psi\otimes S^rV\otimes S^sV^*)_\chi\neq 0$.
\end{theorem}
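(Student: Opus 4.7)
The plan is to work in the ultraproduct realization of $\Rep(GL_t)$ provided by Theorem \ref{t1} and to reduce the claim to a finite-dimensional assertion about $\gl_n(\ck)$-Verma modules that is valid for $\cf$-almost all $n$. Compared with Lemma \ref{l11}, the subtlety is that the required difference $\lam-\mu=e_1+\ldots+e_r-e_{n-s+1}-\ldots-e_n$ is a weight of $S^rV^{\nn}\otimes S^s(V^{\nn})^*$ but \emph{not} its highest weight, so a generic-$\mu$ version of the argument is needed.

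First, I would apply Corollary \ref{c10} to select presentations $\chi=\prod_\cf\chi^{(n)}$ and $\psi=\prod_\cf\psi^{(n)}$ such that for $n>r+s$ the central character $\psi^{(n)}$ is attached to a weight $\mu^{(n)}$ and $\chi^{(n)}$ to $\lam^{(n)}:=\mu^{(n)}+\nu_0^{(n)}$, where $\nu_0^{(n)}:=e_1+\ldots+e_r-e_{n-s+1}-\ldots-e_n$. By further adjusting the individual representatives $\phi_k^{(n)}$ of the numbers $\psi_k$ for each $n$ separately (which does not change the ultraproduct), I would arrange that for $\cf$-almost all $n$ the weight $\mu^{(n)}$ has pairwise distinct entries and no weight $\mu^{(n)}+\nu$ with $\nu$ a weight of $S^rV^{\nn}\otimes S^s(V^{\nn})^*$ different from $\nu_0^{(n)}$ lies in the $W$-orbit of $\lam^{(n)}$.

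The key observation is that $\nu_0^{(n)}=(1^r,0^{n-r-s},(-1)^s)$ is a weight of $S^rV^{\nn}\otimes S^s(V^{\nn})^*$ of multiplicity one, realized by the tensor $x_1x_2\cdots x_r\otimes y_{n-s+1}^*y_{n-s+2}^*\cdots y_n^*$ in the standard monomial basis. Under the genericity condition above, the BGG-type filtration of $M_{\mu^{(n)}}\otimes S^rV^{\nn}\otimes S^s(V^{\nn})^*$ by Verma subquotients $M_{\mu^{(n)}+\nu}$ has its $\chi^{(n)}$-generalized-eigenspace contributed solely by the single Verma $M_{\lam^{(n)}}$, which therefore splits off as a direct summand; in particular
\[
\bigl(M_{\mu^{(n)}}\otimes S^rV^{\nn}\otimes S^s(V^{\nn})^*\bigr)_{\chi^{(n)}}\cong M_{\lam^{(n)}}\ne 0.
\]
Following the strategy of Lemma \ref{l11}, the surjection $U_{\psi^{(n)}}\twoheadrightarrow M_{\mu^{(n)}}$ sending $u$ to $u\cdot v_{\mu^{(n)}}$, tensored with $S^rV^{\nn}\otimes S^s(V^{\nn})^*$ and pushed through the right-exact functor $(-)_{\chi^{(n)}}$, yields a surjection $(U_{\psi^{(n)}}\otimes S^rV^{\nn}\otimes S^s(V^{\nn})^*)_{\chi^{(n)}}\twoheadrightarrow M_{\lam^{(n)}}\ne 0$. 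Nonvanishing for $\cf$-almost all $n$ then gives $(U_\psi\otimes S^rV\otimes S^sV^*)_\chi\ne 0$ in $\Ind(\Rep(GL_t))$ by passing to the ultraproduct.

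The main technical obstacle I anticipate is justifying the genericity assumption in the first step. Corollary \ref{c10} does not by itself produce $\mu^{(n)}$'s with pairwise distinct entries, and for fixed $\phi_k^{(n)}$ the Nullstellensatz step in Lemma \ref{l8} yields only a discrete solution set; the required flexibility must come from perturbing each $\phi_k^{(n)}$ independently (permissible since only the ultraproduct must equal $\psi_k$), and one must verify that staying off the relevant Zariski-closed non-generic locus can be arranged for $\cf$-almost all $n$.
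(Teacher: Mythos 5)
Your observation that $\nu_0 = e_1+\cdots+e_r-e_{n-s+1}-\cdots-e_n = (1^r,0^{n-r-s},(-1)^s)$ is not the highest weight of $S^rV^{(n)}\otimes S^s(V^{(n)})^*$ is correct; for instance $\nu_0 + e_1 - e_2 = (2,0,1^{r-2},0^{n-r-s},(-1)^s)$ is also a weight of this module, so $\nu_0$ is not even maximal in the dominance order. The paper applies Lemma \ref{l11} with $X = S^rV^{(n)}\otimes S^s(V^{(n)})^*$ and asserts that $\nu_0$ is a maximal weight of $X$, and this assertion is false as written. However, the far simpler repair is to take $X = \Lambda^r V^{(n)} \otimes \Lambda^s (V^{(n)})^*$: its weights are $a-b$ with $a,b\in\{0,1\}^n$, $\sum a_i=r$, $\sum b_j=s$, and then $\nu_0$ is the unique highest weight and has multiplicity one. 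With this substitution the paper's application of Lemma \ref{l11} goes through verbatim and yields $(U_\psi \otimes \Lambda^r V \otimes \Lambda^s V^*)_\chi \ne 0$, which certainly suffices for Theorem \ref{t8}. So while you diagnosed a real defect, the natural remedy in the spirit of the paper is a one-word change of $X$, not a new argument.

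Your alternative route (keeping $S^rV\otimes S^sV^*$ and invoking a BGG-type Verma filtration under a genericity hypothesis on $\mu^{(n)}$) could in principle work, but the genericity step you flag is a genuine gap that I do not think you can wave away. The constraint on $(\mu^{(n)})$ is that for every $k$ the ultraproduct $\prod_\cf \sum_i(\mu_i^{(n)})^k$ equals $\psi_k$, which pins down $\psi_k^{(n)}$ for $\cf$-almost all $n$ up to the equivalence of agreeing on a set of the ultrafilter. You cannot ``perturb each $\phi_k^{(n)}$ independently'' on a set of $n$'s lying in $\cf$ without changing the ultraproduct, yet your genericity must hold for $\cf$-almost all $n$; so the perturbation argument as described is circular. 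What you would actually need to show is that, for the given data, the set of $n$ for which every Nullstellensatz solution $(a_i^{(n)})$ lands in the bad locus (some $\mu^{(n)}+\nu$ with $\nu\ne\nu_0$ in the $W$-orbit of $\lambda^{(n)}$, or repeated entries of $\mu^{(n)}$) does not belong to $\cf$. That is plausible but not supplied, and not automatic: for instance if $b_i=b_j$ for $i\ne j$ then the distinct-entries condition fails for $\cf$-almost all $n$, so you would have to weaken the genericity you impose. In short: correct diagnosis, but the proposed cure is both heavier than the paper's (corrected) one-line argument and incomplete precisely at the step you yourself identify as the main obstacle.
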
 
\begin{proof}
By Corollary \ref{c10}, there exist weights $\lam^\nn$ and $\mu^\nn$ of $\gl_n(\ck)$ with
$$
\mu^\nn= \lam^\nn - e_1-\ldots-e_r+e_{n-s+1}+\ldots+e_n,
$$
 and central characters $\chi^\nn, \psi^\nn$ corresponding to these weights, such that
$$
\chi = \prod_\cf \chi^\nn, \psi = \prod_\cf \psi^\nn.
$$
 
  Now, for every $n>r+s$ the module $S^rV^\nn\otimes S^s(V^\nn)^*$ has maximal weight $$\lam^\nn- \mu^\nn = e_1+\ldots+e_r-e_{n-s+1}-\ldots - e_n$$
  Thus, by Lemma \ref{l11}, 
  $$
  (U_{\psi^\nn}\otimes S^rV^\nn\otimes S^s(V^\nn)^*)_{\chi^\nn}\neq 0,
  $$
 when $n>s+r$. Moreover,
 $$
  (F_0U_{\psi^\nn}\otimes S^rV^\nn\otimes S^s(V^\nn)^*)_{\chi^\nn}\neq 0,
  $$
  
  We have 
  $$
  (F_0U_{\psi}\otimes S^rV\otimes S^sV^*)_{\chi} = \prod_\cf ( F_0U_{\psi^\nn}\otimes S^rV^\nn\otimes S^s(V^\nn)^*)_{\chi^\nn} \neq 0.
  $$
 And therefore,
 $$
 (U_{\psi}\otimes S^rV\otimes S^sV^*)_\chi = \mathrm{colim}_i \prod_\cf (F_i U_{\psi^\nn}\otimes S^r V^\nn\otimes S^s(V^\nn)^*)_{\chi^\nn} \neq 0.
 $$

\end{proof}

We have now constructed a nonzero object in the category $\hc_{\chi,\psi}$ with
$$
\chi(u)-\psi(u) = \sum_{i=1}^r e^{b_i u} - \sum_{j=1}^s e^{(c_j-1)u}
$$
for any complex numbers $b_i, 1\le i\le r, c_j, 1\le j \le s$, and thus, we have proved Theorem \ref{t8}.

\subsection{Summary}
The following theorem summarizes Corollary \ref{c7}  and Theorem \ref{t8} and is the main result of this paper:

\begin{theorem}
\label{t13}
The category $\hc_{\chi,\psi}$ is nonzero if and only if there exist complex numbers $b_1,\ldots,b_r, c_1, \ldots,c_s$ such that
$$
\chi(u)-\psi(u) = \sum_{i=1}^r e^{b_i u} - \sum_{i=1}^s e^{c_i u} .
$$

\end{theorem}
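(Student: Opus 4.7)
The plan is to assemble Theorem \ref{t13} directly from Corollary \ref{c7} and Theorem \ref{t8}, which together already provide the two implications. The only subtlety is cosmetic: the earlier results state the condition as
\[
\chi(u)-\psi(u) = \sum_{i=1}^r e^{b_i u} - \sum_{i=1}^s e^{(c_i-1)u},
\]
while Theorem \ref{t13} states it with exponents $c_i$ in place of $c_i-1$. These describe the same set of pairs $(\chi,\psi)$, because $c_i \mapsto c_i+1$ is a bijection of $\C$; so I would absorb the shift by a renaming of variables and treat the two conditions as interchangeable throughout.

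For the forward direction, if $\hc_{\chi,\psi}\neq 0$, Corollary \ref{c7} produces $b_1,\ldots,b_r$ and $c_1',\ldots,c_s'$ in $\C$ satisfying
\[
\chi(u)-\psi(u) = \sum_{i=1}^r e^{b_i u} - \sum_{i=1}^s e^{(c_i'-1)u};
\]
setting $c_i := c_i'-1$ immediately yields the form in Theorem \ref{t13}. Conversely, starting from a given equality $\chi(u)-\psi(u) = \sum_{i=1}^r e^{b_i u} - \sum_{i=1}^s e^{c_i u}$, I would put $c_i' := c_i + 1$ so that $e^{c_i u} = e^{(c_i'-1)u}$, and then Theorem \ref{t8} guarantees $\hc_{\chi,\psi}\neq 0$; in fact the nonzero object constructed in the proof of Theorem \ref{tmainth}, namely $(U_\psi \otimes S^r V \otimes S^s V^*)_\chi$ with parameters $b_i, c_i'$, is an explicit witness.

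Because both implications have already been established in the statements being combined, there is no genuine obstacle at this step. The only thing requiring care is the bookkeeping of the shift $c_i \leftrightarrow c_i - 1$ when quoting the earlier statements, so that the exponents in the generating function match the normalization of Theorem \ref{t13}.
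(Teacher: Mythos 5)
Your proposal is correct and matches the paper's argument exactly: the paper also obtains Theorem \ref{t13} as an immediate combination of Corollary \ref{c7} and Theorem \ref{t8}, with the accompanying note that $c_i-1$ has been renamed $c_i$ to simplify the formula. Nothing further is needed.
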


\begin{note}
We replaced $c_i-1$ with $c_i$ (as compared to Theorem \ref{t8}) to simplify the formula.
\end{note}
\noindent
\textit{Examples.}
Theorem \ref{tmainth} provides us with an example of a nonzero bimodule in $\hc_{\chi,\psi}$ if $\chi,\psi$ satisfy the conditions of Theorem \ref{t13}. We have
$$
0\neq (U_\psi\otimes S^rV\otimes S^sV^*)_\chi\in \hc_{\chi,\psi}.
$$

Other examples can be found in \cite{U}, where we describe a construction of a family of irreducible Harish-Chandra bimodules. These bimodules are a genralization of finite-dimensional bimodules in the classical case and have finite K-type (see Remark \ref{raboutclassichc}).  The corresponding central characters are computed in Section 5 of \cite{U}.

\begin{bibdiv}
\begin{biblist}


\bib{BG}{article}{
author={J.N. Bernstein},
author={S.I. Gelfand},
title={Tensor products of finite and infinite dimensional
representations of semisimple Lie algebras},
journal={Compositio Mathematica},
volume={41},
pages={245--285},
date={1980},
number={2}
}

\bib{D1}{article}{

title={La Cat\'egorie des représentations du groupe sym\'etrique $S_t$, lorsque t n'est pas un entier naturel},
author={P. Deligne},
journal={Algebraic groups and homogeneous spaces},
date={2007},
pages={209--273}
}

\bib{D2}{article}{

title={Cat\'egories tannakiennes},
author={P. Deligne },
journal={The Grothendieck Festschrift, Vol. II, Progr. Math},
volume={87},
date={1990},
pages={111--195}
}

\bib{D3}{article}{

title={Cat\'egories tensorielles},
author={P. Deligne},
journal={Moscow Math. J},
volume={2},
date={2002},
pages={227--248},
number={2}
}

\bib{DM}{article}{

title={Tannakian categories},
author={P. Deligne},
author={J. Milne},
journal={Lecture notes in mathematics},
volume={900},
date={1982},
pages={http://www.jmilne.org/math/xnotes/tc.pdf}
}

\bib{Dix}{book}{
author={Jacques Dixmier},
title={Enveloping Algebras},
publisher={Graduate Studies in Mathematics},
date={1996},
volume={11},
}

\bib{E}{article}{
title={Representation theory in complex rank, II},
author={P. Etingof },
journal={Advances in Mathematics},
volume={300}, 
pages={473--504},
date={2016},
label = {E}
}

\bib{EGNO}{book}{
title={Tensor Categories},
author= {P. Etingof},
author={S. Gelaki},
author = {D, Nikshych},
author = {V. Ostrik},
publisher = {Mathematical Surveys and Monographs},
volume = {205},
date={2015},
label = {EGNO}
}

\bib{H}{article}{

title={Deligne categories as limits in rank and
characteristic},
author={N. Harman},
journal={arXiv:1601.03426}
}

\bib{HK}{article}{

title = {Classification of simple algebras in the Deligne
category $\mathrm{Rep}(S_t)$},
author = {N.Harman},
author={D.Kalinov},
journal={Journal of Algebra},
volume={549},
pages={215--248},
date={2020},
}

\bib{K}{article}{
title={Finite-dimensional representations of Yangians in complex rank},
author={D. Kalinov},
journal={International Mathematics Research Notices},
date={2020},
volume={20},
pages={6967--6998}
}

\bib{S}{article}{
 author={L. Sciarappa},
 title={Simple commutative algebras in Deligne’s categories $\Rep(S_t)$},
 journal={arXiv:1506.07565},
 date = {2015},
 
}

\bib{U}{article}{
author={A. Utiralova},
contribution={
type={an appendix},
author={Hu, S.}
},
title={Harish-Chandra bimodules of finite $K$-type in Deligne categories},
journal = {arXiv:2107.03173},
date={2021},
}

\end{biblist}
\end{bibdiv}

\end{document}